\def\IR{{\Bbb R}} 
\def\IS{{\Bbb S}} 
\def\IK{{\Bbb K}}
\def\IC{\Bbb C} 
\def\ID{{\Bbb D}}
\def\IA{{\Bbb A}}
\def\zbar{{\overline{z}}}
\def\Mod{{\rm mod}\, }
\newcounter{minutes}\setcounter{minutes}{\time}
\newcounter{hours}\setcounter{hours}{\time}
\newtheorem{theorem}{Theorem}
\newtheorem{lemma}{Lemma}
\newtheorem{corollary}{Corollary}
\title{Distortion and topology}
\author{Riku Kl\'en \thanks{Research supported by the Marsden Fund.} \and Gaven Martin \footnotemark[1] \thanks{Thanks to Aalto University and the University of Helsinki where this research was initiated and IPAM (UCLA) and ICERM (Brown) where it was partially supported.}} 
\date{} 
\begin{document} 

\maketitle 

\def\thefootnote{}
\footnotetext{
\texttt{\tiny File:~\jobname .tex,
          printed: \number\year-\number\month-\number\day,
          \thehours.\ifnum\theminutes<10{0}\fi\theminutes}
}
\makeatletter\def\thefootnote{\@arabic\c@footnote}\makeatother

\begin{abstract}  For a self mapping $f:\ID\to \ID$ of the unit disk in $\IC$ which has finite distortion,  we give a separation condition on the components of the set where the distortion is large - say greater than a given constant - which implies that $f$ extends homeomorphically and quasisymetrically to the boundary $\IS$ and thus $f$ shares its boundary values with a quasiconformal mapping whose distortion can be explicitly estimated in terms of the data.  This result holds more generally.    This condition,  {\em uniformly separated in modulus},  allows the set where the distortion is large to accumulate densely on the boundary but does not allow a component to run out to the boundary.  The lift of a Jordan domain in a Riemann surface to its universal cover $\ID$ is always uniformly separated in modulus and this allows us to apply these results in the theory of Riemann surfaces to identify an interesting link between the support of the high distortion of a map and topology of the surface - again with explicit and good estimates.  As part of our investigations we study mappings $\varphi:\IS\to\IS$ which are the germs of a conformal mapping and give good bounds on the distortion of a quasiconformal extension of $\varphi$.  We extend these results to the germs of quasisymmetric mappings.  These appear of independent interest and identify new geometric invariants.
\end{abstract}

\section{Introduction}   

The theory of mappings of finite distortion has been developed over the last couple of decades to extend the classical theory of quasiconformal mappings in new directions to build stronger linkages with the calculus of variations.  The underlying elliptic PDEs of quasiconformal mappings - in particular  Beltrami equations - are replaced by their  degenerate elliptic counterparts.  A recent thorough account of the two dimensional theory is given in \cite{AIM,IMbelt} (and the references therein) while the higher dimensional theory is accounted for in \cite{IMbook}.  Recent problems seek to study various minimisation problems for integral means of distortion, see \cite{AIM1,AIMem,Martin3} for the $L^1$ case and \cite{IMO} for the $L^p$ case.  An eventual aim is to develop an $L^p$-Teichm\"uller theory.

Without the a priori bounds of the theory of quasiconformal mappings,  sequences of mappings of finite distortion may degenerate quite badly.  

 Here we use these ideas to study the boundary values of self homeomorphisms of finite distortion defined on the disk and give applications in the theory of Riemann surfaces.   One of the great virtues of our approach is in achieving very explicit and clean estimates.

\section{Germs of quasisymmetric mappings}

For $0\leq r< R$ we define the annulus  
\begin{equation*}
\IA(r,R) = \{z: r < |z| < R \}.
\end{equation*}
Let $U\subset \ID$ be an open doubly connected (a {\em ring}) subset of $\ID$ with $\IS\subset \partial U$. Let $\mu:U\to\ID$ with $\|\mu\|_{L^\infty(U)}=k_{\mathfrak{g}}<1$.  Then we can solve the Beltrami equation  
\begin{equation}\label{be}
g_\zbar = \mu(z)\,  g_z, \hskip20pt \mbox{almost every $z\in U$}
\end{equation}  
for a quasiconformal homeomorphism $g:U\to \IC$,  uniquely up to conformal mappings of $g(U)$.  Now
$g(U)$ is also doubly-connected and so conformally equivalent to the annulus $\IA(r,1)$ when $\log 1/r = {\rm mod}(g(U))$ -  the conformal modulus.   That is there is a conformal mapping $\psi:g(U)\to\IA(r,1)$,  which is unique up to a rotation.  Then $\psi\circ g:U\to \IA(r,1)$ and the Carath\'eodory extension (or reflection) principle shows this quasiconformal map extends as a quasiconformal mapping $\widetilde{\psi\circ g}:U\cup \IS \cup U^* \mapsto  \IA(r,1/r)$,  where $U^*=\{1/z:z\in U\}$. In this way the pair  $(U,\mu)$ determines  a unique quasisymmetric mapping $g_0=\widetilde{\psi\circ g}\big|\IS:\IS\to\IS$ with $g_0(1)=1$.   We call the pair $\mathfrak{g}=(U,\mu)$ a germ of the quasisymmetric homeomorphism $g_0:\IS\to\IS$.  

\begin{figure}[ht!]
\begin{center}
  \includegraphics[width=.7 \textwidth]{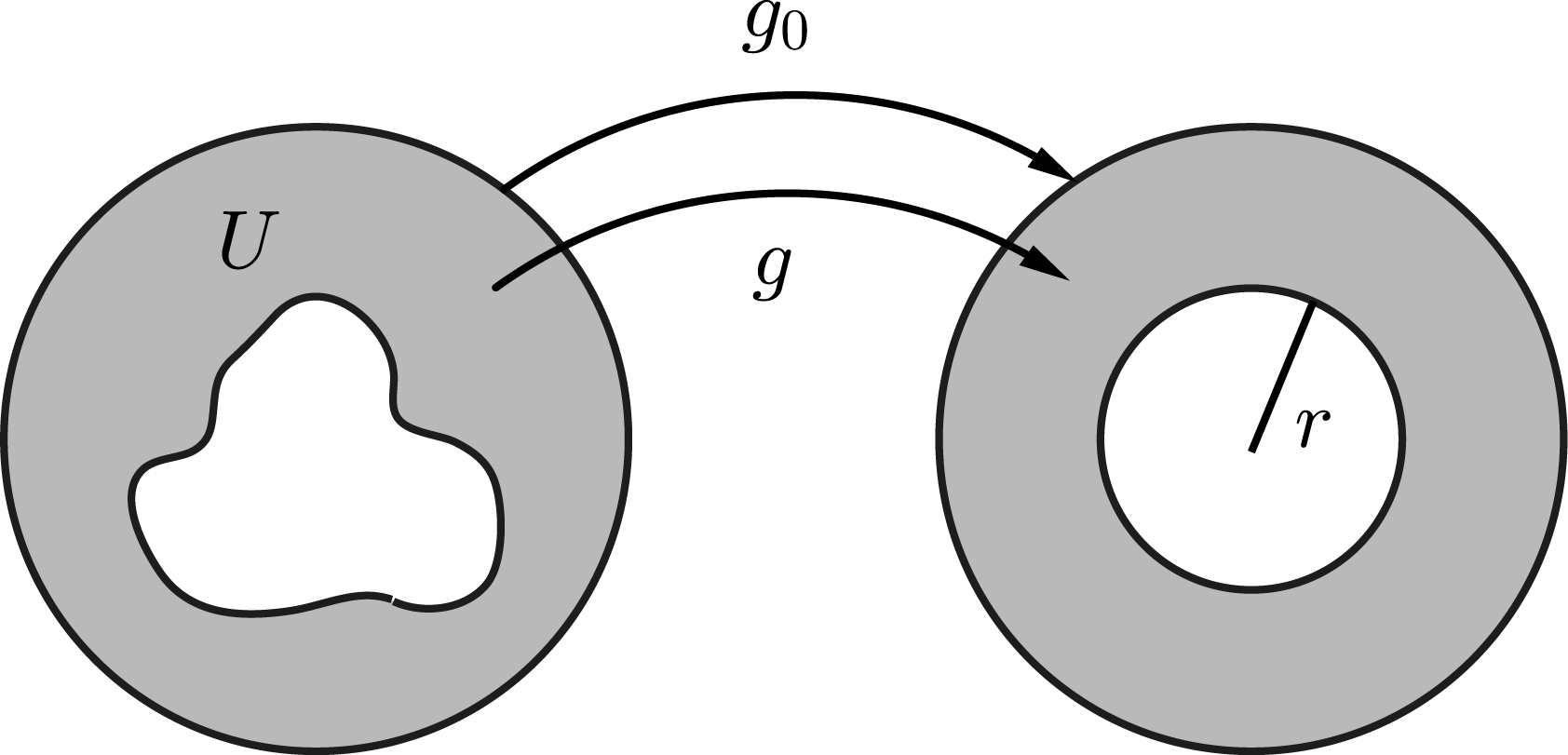}
  \caption{Conformal mapping $g \colon U \to \IA(r,1)$, $g_0 \colon \IS \to \IS$ and ${\rm mod} (U) = {\rm mod} (\IA(r,1)) = \log \frac1r$.}
\end{center}
\end{figure}

There are,  of course,  natural structures on the set of germs via composition and restriction.  Note that it is only under very special circumstances that for $V\subset U$,  the maps $(U,\mu)\mapsto g_U$ and $(V,\mu|V)\mapsto g_v$ of germs give the same quasisymmetric mapping - even when $\mu\equiv 0$.  For instance $(\IA(r,1),0)$ is a germ of the identity map for all $r>0$.  For $\mu\not\equiv 0 $, uniqueness of solutions to the Beltrami equation up to conformal equivalence shows that all examples are modelled similarly. Further,  if $U\neq \IA(r,1)$ for any $r$,  then $(U,0)\not\mapsto identity$.  Another example is furnished by the following easy lemma.  

\begin{lemma} \label{lemma1} Let $\mathfrak{g}=(U,0)$ be the germ of the quasisymmetric mapping $g_0$ and suppose that the inner boundary component of $U$ is a circle.  Then $g_0=\phi|\IS$ for a M\"obius transformation $\phi$ of the disk.
\end{lemma}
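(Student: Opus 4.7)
The plan is to reduce to the case of a round annulus via an automorphism of $\ID$, after which the extension is supplied directly by the Schwarz reflection principle and the induced boundary map is visibly Möbius.

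Since $\mu\equiv 0$, any solution $g$ of \eqref{be} is conformal on $U$. Let $C$ denote the inner boundary circle; it lies strictly inside $\ID$, for otherwise $U$ could not be doubly connected with $\IS$ as its other boundary component. A Euclidean circle contained in $\ID$ is automatically a hyperbolic circle, so $C$ has a well-defined hyperbolic centre $p\in\ID$. Choose a Möbius self-map $\phi$ of $\ID$ with $\phi(p)=0$; then $\phi(C)$ is a hyperbolic circle centred at $0$, equivalently a Euclidean circle $\{|z|=s\}$ for some $s\in(0,1)$. Since $\phi$ also preserves $\IS$ setwise, $\phi(U)=\IA(s,1)$.

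Now take the conformal map $g=\phi\big|_U:U\to\IA(s,1)$ as our solution of \eqref{be}. Its image is already a round annulus of modulus $\log(1/s)$, so the further normalising conformal map $\psi:g(U)\to\IA(s,1)$ appearing in the construction of the germ is simply a rotation $z\mapsto e^{i\theta}z$ (we orient it so that outer boundaries match). The composition $\psi\circ g=e^{i\theta}\phi$ is itself a Möbius self-map of $\oC$ preserving $\IS$, so its own analytic continuation across $\IS$ furnishes the Carath\'eodory extension, and $\widetilde{\psi\circ g}=e^{i\theta}\phi$ on $U\cup\IS\cup U^{*}$. Restricting to $\IS$ gives $g_0=(e^{i\theta}\phi)\big|_{\IS}$, and the free rotation parameter $\theta$ is finally pinned down by the normalisation $g_0(1)=1$.

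The only step that really needs attention is the opening reduction: that a Möbius self-map of $\ID$ can always bring the inner boundary circle into concentric position with $\IS$. This relies on the classical identification of Euclidean circles in $\ID$ with hyperbolic circles, together with the transitivity of the automorphism group of $\ID$ on points of $\ID$. Beyond that, the conclusion is immediate from the conformality of $g$ and the uniqueness-up-to-rotation built into the germ construction.
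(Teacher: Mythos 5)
Your proof is correct: the paper states Lemma \ref{lemma1} without proof, and your argument is precisely the intended one --- normalise the inner circle to be concentric by a M\"obius self-map $\phi$ of $\ID$ (using that Euclidean circles strictly inside $\ID$ are hyperbolic circles), take $g=\phi|_U$ as the conformal solution, and invoke the uniqueness-up-to-rotation (with outer boundaries matched) in the germ construction together with the normalisation $g_0(1)=1$. The only points you gloss over --- that $U$ must be the full ring between the circle $C$ and $\IS$, and that the reflection extension of $e^{i\theta}\phi$ is $e^{i\theta}\phi$ itself --- are immediate, so the argument stands as written.
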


Now,  as a quasisymmetric homeomorphism $\IS\to\IS$,  the map $g_0$ admits a quasiconformal extension $G:\ID\to\ID$ with $G|\IS=g_0$.  There are many such extensions,  the most well-known are the Ahlfors-Beurling \cite{AB} or the Douady-Earle extensions \cite{DE},  but there are others,  see \cite{AIM}.  A problem with these extensions is the quite poor effective bounds one achieves between the quasisymmetry constants and the quasiconformal distortion. Here we seek effective bounds on $\|\mu_G\|_{L^{\infty}(\ID)}$ in terms of $k_{\mathfrak{g}}$ and the conformal modulus of $U$.  The problem is quite nontrivial even in the case of germs of the form $\mathfrak{g}=(U,0)$.  Here $\mathfrak{g}\mapsto g_0$ is a real analytic diffeomorphism of $\IS$.  A germ $\mathfrak{g}=(U,\mu)$ naturally defines three quantities; ${\rm mod}(U)$, $k_{\mathfrak{g}} = \|\mu\|_{L^{\infty}(U)} $, and 
\begin{eqnarray} \label{2}
m_{\mathfrak{g}} & = &  {\rm mod}(g(U)), \hskip10pt \mbox{ where $g$ solves (\ref{be}) }.
\end{eqnarray}
Since $g$ is $\frac{1+k_{\mathfrak{g}}}{1-k_{\mathfrak{g}}}$--quasiconformal we have the estimate
\begin{equation*}
 \frac{1-k_{\mathfrak{g}}}{1+k_{\mathfrak{g}}}  \;  {\rm mod}(U)\leq m_{\mathfrak{g}} \leq  \frac{1+k_{\mathfrak{g}}}{1-k_{\mathfrak{g}}}  \;  {\rm mod}(U).
\end{equation*}

\section{Three problems}
\noindent{\bf Problem 1.} {\em Bound the maximal distortion of a quasiconformal extension of the quasisymmetric mapping $g_0$ defined by the germ $\mathfrak{g}=(U,0)$. }

\medskip

We will give two answers to this problem.  Surprisingly, we will see that the answer seems to depend on the ``roundness'' of the inner boundary component of $U$ and not more subtle invariants - this is suggested by Lemma \ref{lemma1}.   What  is less clear is that this inner boundary component may have positive measure,  yet $g_0$   admits a $(1+\epsilon)$--quasiconformal extension if it is ``nearly circular''.

\medskip

Then we use this results to obtain information about the more general question:

\medskip

\noindent{\bf Problem 2.} {\em Bound the maximal distortion of a quasiconformal extension of the quasisymmetric mapping $g_0$ defined by the germ $\mathfrak{g}=(U,\mu)$ in terms of ${\rm mod}(U)$ and $k_{\mathfrak{g}}$.}

\medskip

The solution to Problems 1 and 2 are clean,  with effective estimates and enable us to consider the more general problem. Precise definitions are given below.

\medskip

\noindent{\bf Problem 3.} {\em Suppose $f:\ID \to\ID$ is a homeomorphism of finite distortion with Beltrami coefficient $\mu_f = f_\zbar/f_z$.  Give conditions on a set $E$ to have the following property:  If  
\[ \|\mu_f\|_{L^\infty(\ID\setminus E)} \leq k <1, \]  then $f$ extends to a homeomorphism $f:\IS \to\IS$ which is quasisymmetric and admits a quasiconformal extension to $\ID$ for which the maximal distortion depends effectively on $k$ and the geometry of $E$.  }

\medskip

The most interesting case here is when $E$ does not have compact closure in $\ID$ (for in that case the solution to Problem 2 will apply).  The condition we will find on $E$,  being {\em uniformly separated in modulus},  is similar to being `porous',  but implies $E$ cannot have components running to the boundary.  This is necessary as we will see.  If $\Sigma$ is a Riemann surface, and $\Omega$ is a Jordan domain in $\Sigma$,  then the lifts of $\Omega$ to the universal cover $\ID$ will satisfy the uniformly separated in modulus condition. Then using all these results we will  show - with explicit estimates - that distortion and topology must interact for degeneration to occur in sequences of Riemann surfaces.  As an example suppose we have a base Riemann surface $\Sigma_0$ and a sequence of quasiconformally equivalent surfaces converging to a limit surface geometrically - $\Sigma_{i} \to \Sigma_{\infty}$ with quasiconformal maps $f_i:\Sigma_0\to\Sigma_i$.  With no a priori bounds on the distortion, the surface $\Sigma_\infty$ may have a different topological type.  However,  suppose that $\Omega\subset \Sigma$  is a Jordan domain  and that
\begin{equation*}
\sup_{i} \; \| \; \mu_{f_i} \; \|_{\;L^\infty(\Sigma\setminus\Omega)} \leq k< 1.
\end{equation*}
  Then we will show $\Sigma_\infty$ is quasiconformally equivalent to $\Sigma$.   The hypothesis that $\overline{ \Omega}$ is simply connected is essential as examples given by shrinking a simple closed loop on a particular surface show.  
  
\medskip

Problems 1 \& 2 have quite nice solutions in Theorem \ref{mainthm2} and Theorem \ref{thm5} below.  In particular,  in both cases we achieve the bound 
\[ \left(1+\frac{10}{m_\mathfrak{g}} \right)\; K,  \hskip20pt  K=\frac{1+k}{1-k} \]
with $K=1$ for Problem 1.  Notice this bound has good behaviour as ${\rm mod}(U)\to\infty$. The bound $10$ is not sharp,  but examples show that this bound has the correct structure for ${\rm mod}(U)$ small and the number $10$ cannot be replaced by any constant less than $1$. To achieve this bound in Problem 1, we  use the solution to Problem 2  which relies on a more complex solution to Problem 1.

\medskip

We now recall  two of the basic notions we will need for this paper.  
\subsection{Mappings of finite distortion}
A homeomorphism $f:\Omega\subset \IC \to \IC$ defined on a domain $\Omega$ and lying in the Sobolev class $f\in W^{1,1}_{loc}(\Omega,\IC)$ of functions with locally integrable first derivatives is said to have {\em finite distortion} if there is a  distortion  function $K(z,f)$, finite almost everywhere, so that $f$ satisfies the {\em distortion inequality} 
\begin{equation*}
|Df(z)|^2 \leq K(z,f) \; J(z,f), \hskip20pt\mbox{almost everywhere in $\Omega$.}
\end{equation*}
Here  $Df(z)$ is the Jacobian matrix and $J(z,f)$ its determinant.  If $K(z,f)\in L^\infty(\Omega)$ and $K_f=\|K(z,f)\|_\infty$,  then $f$ is $K_f$--quasiconformal.  The basic theory of mappings of finite distortion has been developed in recent years,  and the two dimensional aspects are  described in \cite{AIM},  though there is much interesting recent work.

If the mapping $f$ has finite distortion,  then $f$ has a Beltrami coefficient $\mu:\Omega \to \ID$ and $f$ solves the degenerate Beltrami equation 
 \begin{equation*}
 f_\zbar(z) = \mu(z)\;  f_z(z) , \hskip20pt\mbox{almost every $z\in\Omega$.}
 \end{equation*}
The relationship between $K(z,f)$ and $\mu(z)$ is simply $K(z,f)=\frac{1+|\mu(z)|}{1-|\mu(z)|}$ so that if $\|\, \mu\, \|_\infty = k <1$,  then $f$ is quasiconformal.  Be aware that existence and uniqueness questions for solutions to the Beltrami equation in this degenerate setting are quite involved and such classical results as Stoilow factorisation may well fail without additional regularity assumptions.

 \section{Quasiconformal extension} 

In this section we consider the first problem we raised.  Let $\mathfrak{g}=(U,0)$.  From this germ $\mathfrak{g}$,  as discussed above, we can construct a conformal mapping 
\begin{equation}\label{varphidef}
\varphi:V\to  \IA(r_0,1/r_0), \hskip20pt r_0=e^{-{\rm mod}(U)}
\end{equation}
defined on $V=U\cup \IS \cup U^*$,  and $g_0=\varphi|\IS:\IS\to\IS$ is quasisymmetric.  Now $g_0$ will be a real analytic diffeomorphism and so we can consider extensions whose distortion bounds depend on derivatives.  
 
 \subsection{Radial extension}
 
 Let us first discuss an obvious method of extension of bilipschitz homeomorphisms.
 
\begin{lemma}\label{ext1}
 Let $g_0(e^{i\theta})=e^{if(\theta)}:\IS\to \IS$ be a bilipschitz homeomorphism with $f:[0,2\pi)\to[0,2\pi)$ increasing and $0< \ell \leq f'(\theta) \leq L<\infty$ almost everywhere.  Then $g_0$ admits a $K$--quasiconformal extension $G:\ID\to\ID$,  $G|\IS=g_0$, with $K\leq \max\{1/\ell, L\}$.
\end{lemma}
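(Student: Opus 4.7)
The plan is to use the radial extension
\[
G(re^{i\theta}) := r\, e^{if(\theta)}, \qquad 0 \leq r \leq 1,\ \theta \in [0,2\pi).
\]
Since $f$ is an increasing bilipschitz bijection of $[0,2\pi)$ and $G$ preserves the modulus of every point, $G$ is a self-homeomorphism of $\ID$ fixing the origin and extending continuously to $\IS$ as $g_0$. Bilipschitz regularity of $f$ makes $G$ Lipschitz on every closed annulus $\{r_0 \leq |z| \leq 1\}$, so $G$ lies in $W^{1,\infty}_{\rm loc}(\ID \setminus \{0\})$ and is absolutely continuous on lines there. The isolated singularity at the origin is removable for quasiconformality (indeed $|G(z) - G(0)| = |z|$), so it suffices to bound the pointwise distortion on $\ID\setminus\{0\}$.

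Next, I compute the differential of $G$ at a.e.\ point in the orthonormal polar frames $(\partial_r,\, r^{-1}\partial_\theta)$ at the source and the analogous frames at the image. Writing $G$ in polar coordinates as $(r,\theta) \mapsto (r,\, f(\theta))$ and invoking Rademacher's theorem applied to the Lipschitz $f$, the matrix of $DG$ in these frames is the diagonal matrix
\[
\begin{pmatrix} 1 & 0 \\ 0 & f'(\theta) \end{pmatrix},
\]
with singular values $1$ and $f'(\theta)$. Consequently the pointwise linear distortion is
\[
K(z,G) \;=\; \max\!\left\{ f'(\theta),\, \frac{1}{f'(\theta)} \right\} \;\leq\; \max\{L,\, 1/\ell\}
\]
for a.e.\ $z \in \ID\setminus\{0\}$, which combined with the regularity above exhibits $G$ as the desired quasiconformal extension with the claimed distortion bound.

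There is no real obstacle. The argument is essentially a direct computation in polar coordinates, and the bilipschitz hypothesis is exactly what is needed both for the Sobolev regularity of the radial extension and for the pointwise control of the distortion; the only very mild subtlety is the removability of the origin, which is trivial because $G$ is $1$-Lipschitz there.
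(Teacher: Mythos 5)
Your proposal is correct and follows essentially the same route as the paper: both use the radial extension $G(re^{i\theta})=re^{if(\theta)}$ and compute its pointwise distortion, the paper via the Wirtinger derivatives and Beltrami coefficient ($|\mu_G|=|1-f'|/|1+f'|$), you via the singular values $1$ and $f'(\theta)$ of $DG$ in polar frames, which give the identical bound $K(z,G)=\max\{f'(\theta),1/f'(\theta)\}\leq\max\{L,1/\ell\}$. Your extra attention to the ACL/Sobolev regularity and the removable singularity at the origin only makes explicit what the paper leaves as a remark about the regularity assumptions.
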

\noindent{\bf Proof.}  Set
$
G(re^{i\theta}) = r e^{if(\theta)}.
$ 
The regularity assumptions on $f$ imply that $G$ is a mapping of finite distortion.   We calculate
 \begin{eqnarray*}
 G_\zbar & = & \frac{e^{-i\theta}}{2}\left( \frac{\partial}{\partial r} +  \frac{i}{r} \frac{\partial}{\partial\theta}\right) \, G  = \frac{e^{-i\theta}}{2}\left( e^{if(\theta)}- f'(\theta)e^{if(\theta)} \right),  \\
 G_z & = & \frac{e^{-i\theta}}{2}\left( \frac{\partial}{\partial r} - \frac{i}{r} \frac{\partial}{\partial\theta}\right) \, G  = \frac{e^{-i\theta}}{2}\Big(e^{if(\theta)}+ f'(\theta) e^{if(\theta)} \Big) 
 \end{eqnarray*}
 and so the Beltrami coefficient of $G$ has
 \[ |\mu_G| = \frac{|1-f'(\theta)|}{|1+f'(\theta)|} \]
 and therefore the distortion 
 \[ K = \frac{1+|\mu_G|}{1-|\mu_G|} = \frac{ 1+f'(\theta)+ |1-f'(\theta)|}{1+f'(\theta) - |1-f'(\theta)|}. \]
Consequently if $f'(\theta)\leq 1$ we obtain $K = 1/f'(\theta) \leq 1 / \ell$ and if $f'(\theta)\geq 1$ we have $K = f'(\theta) \leq K$.
This completes the proof. \hfill  $\Box$

\medskip

We remark that if we define
\begin{equation*}
\IK(z,f) = \frac{1}{2}\big( K(z,f)+1/K(z,f)\big) = \frac{1+|\mu_f|^2}{1-|\mu_f|^2}
\end{equation*}
then
\begin{equation*}
\IK(z,G) =  \frac{|1-f'(\theta)|^2+|1+f'(\theta)|^2}{|1+f'(\theta)|^2-|1-f'(\theta)|^2} = \frac{1}{2} \left( f'(\theta)+\frac{1}{f'(\theta)}\right) . 
\end{equation*}

\subsection{Asymptotically conformal extension}  With the hypotheses of Lemma \ref{ext1}, if in addition,  $f'$ is Lipschitz and if we define
\begin{equation*}
G(re^{i\theta}) = r^{f'(\theta)} e^{if(\theta)},
\end{equation*}
then
\begin{eqnarray*}
 G_\zbar & = & \frac{1}{2}  e^{if(\theta)-i\theta} r^{f'(\theta)-1} \, \Big(f'(\theta) + i ( f''(\theta)\log r  + i f'(\theta))\Big) \\ & = &   \frac{-1}{2}  e^{if(\theta)-i\theta} r^{f'(\theta)-1} \,  f''(\theta)\log r,\\
 G_z& = & \frac{1}{2}  e^{if(\theta)-i\theta} r^{f'(\theta)-1} \, \Big(f'(\theta) - i ( f''(\theta)\log r  + i f'(\theta))\Big) \\ & = &   \frac{1}{2}  e^{if(\theta)-i\theta} r^{f'(\theta)-1} \, \Big(2f'(\theta) - i  f''(\theta)\log r  \Big) 
 \end{eqnarray*}
 so that
  \[ \big|\,\mu_G(re^{i\theta})\,\big| = \frac{|f''(\theta)\log r| }{|2f'(\theta) - i  f''(\theta)\log r |} = \frac{ 1}{ \sqrt{1+\left(\frac{2f'(\theta)}{f''(\theta)\log r}\right)^2}} . \]
 Notice that $|\mu_G|\to 0$ as $r\to 1$ 
 and so $G$ will be {\em asymptotically conformal}. Indeed we have the following.
 \begin{lemma}\label{lemma3}
  Let $g_0(e^{i\theta})=e^{if(\theta)}:\IS\to \IS$ be a homeomorphism with $f:[0,2\pi)\to[0,2\pi)$ increasing and 
  \begin{equation*} L_f = \Bigg\| \frac{f''(\theta)}{f'(\theta)} \Bigg\|_{L^{\infty}(\IS)} <\infty.
  \end{equation*} 
    Then $g_0$ admits an extension $G:\ID\to\ID$,  $G|\IS=g_0$ and 
  \begin{equation*}
  \frac{ |\, \mu_G(re^{i\theta})\, | }{\log 1/r } =   \frac{1}{ \sqrt{\log^2 r +\left(\frac{2f'(\theta)}{f''(\theta)}\right)^2}} \approx  \frac{1}{2} \, \frac{f''(\theta)}{f'(\theta)},  \hskip10pt \mbox{as $r\to 1$}.
  \end{equation*}
 \end{lemma}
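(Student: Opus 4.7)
The plan is to verify that the extension defined in the paragraph immediately preceding the lemma,
\[
G(re^{i\theta}) = r^{f'(\theta)}\,e^{if(\theta)},
\]
does the job, by packaging the computation already carried out. Three points need to be secured: that $G$ is a self-homeomorphism of $\ID$ extending $g_0$, that $G$ has finite distortion with the Beltrami coefficient computed above, and that the asymptotic formula holds as $r\to 1^-$.

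First, boundary values are immediate since $r^{f'(\theta)}\to 1$ as $r\to 1^-$, giving $G(re^{i\theta})\to e^{if(\theta)}=g_0(e^{i\theta})$. For injectivity one needs $f'(\theta)>0$ everywhere; this is forced by the combination of $f$ being a circle homeomorphism and $L_f<\infty$ (a zero of $f'$ would either collapse a radius to a point or make $f''/f'$ unbounded). Given $f'>0$, the map $r\mapsto r^{f'(\theta)}$ is an increasing homeomorphism of $[0,1]$ for each fixed $\theta$, and since $\theta\mapsto f(\theta)$ is a homeomorphism of $\IS$, the polar-product form of $G$ is a homeomorphism of $\ID$ onto itself.

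Second, the Beltrami computation. Since $f'$ is bounded (being the angular derivative of a circle homeomorphism) and $L_f<\infty$, one deduces $f''\in L^\infty$, so $f'$ is Lipschitz and $G$ lies in $W^{1,\infty}_{\mathrm{loc}}(\ID\setminus\{0\})$. This justifies the pointwise formulas for $G_z$ and $G_{\bar z}$ already displayed, which give
\[
|\mu_G(re^{i\theta})| \;=\; \frac{|f''(\theta)\log r|}{\sqrt{4f'(\theta)^2 + f''(\theta)^2\log^2 r}} \;<\; 1
\]
almost everywhere, so $G$ is a homeomorphism of finite distortion.

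Third, the asymptotic. Dividing the previous equality by $\log(1/r)=-\log r$ yields directly
\[
\frac{|\mu_G(re^{i\theta})|}{\log 1/r} \;=\; \frac{1}{\sqrt{\log^2 r + \bigl(2f'(\theta)/f''(\theta)\bigr)^2}},
\]
which is the claimed identity, and the limit $r\to 1^-$ is obtained by putting $\log^2 r=0$ in the radicand, producing $|f''(\theta)|/(2f'(\theta))$. There is no real obstacle here: the lemma is essentially a bookkeeping consolidation of the explicit calculation already in the text, the one non-routine check being the positivity of $f'$ needed to make $G$ a homeomorphism.
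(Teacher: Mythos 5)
Your proposal is correct and follows essentially the same route as the paper: the lemma is precisely the consolidation of the displayed computation immediately preceding it, using the extension $G(re^{i\theta}) = r^{f'(\theta)}e^{if(\theta)}$, its Beltrami coefficient, and the limit as $r\to 1$. The additional checks you supply (positivity and boundedness of $f'$ from $L_f<\infty$, regularity justifying the derivative formulas, and injectivity of $G$) are routine points the paper leaves implicit, so there is no substantive difference in approach.
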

Naturally this map is not quasiconformal  as the behaviour as $r\to 0$ is too bad,   $|\mu_G| \to 1$.  However the point is to observe that $\log 1/r = {\rm mod}(\IA(r,1))$ and so the solution to Problem 2 will determine a good global bound,  see Theorem \ref{Lf}.

 \subsection{Roundness}  The next bounds we  achieve on the maximal distortion of an extension of $g_0=\varphi|\IS$ as defined at (\ref{varphidef}) will depend on a M\"obius invariant notion of roundness of the images  $\varphi^{-1}(\IS(0,r))$,  $r> r_0 = e^{-{\rm mod}(U)}$.  We subsequently estimate these quantities geometrically.   
 
 \medskip
 
 Let $\gamma$ be a Jordan curve in $\ID$ bounding a region $\Omega_\gamma\subset\ID$.  We define
 \begin{equation*}
 {\rm mod}(\gamma) = {\rm mod}(\ID\setminus \Omega_\gamma). 
 \end{equation*}
 We then define the roundness of  $\gamma$ as follows:  For $a\in \Omega_\gamma$ define $\ell_a$ and $L_a$ by the rule
 \begin{eqnarray*}
 \log \frac{1+\ell_a}{1-\ell_a} = \inf \big\{\, \rho_{\ID}(a,z) : z\in \gamma \big\}, &&
 \log \frac{1+L_a}{1-L_a}  = \sup \big\{\, \rho_{\ID}(a,z) : z\in \gamma \big\}.
 \end{eqnarray*}
 It is immediate that if the M\"obius transformation
 \begin{equation*}
  \phi_a(z) =  \frac{z+a}{1+\bar a z},
 \end{equation*}
 then $\gamma \subset \phi_a(\IA(\ell_a,L_a))$ and this image is a hyperbolic annulus centred at $a$ and $\gamma$ winds around $a$ once.  Again,  an elementary compactness argument shows that there is a thinest such annulus as we move through possible centres $a$ - though in that case we might have $\ell_a=L_a$ and $\gamma$ is a circle with hyperbolic centre $a$.  We use this thinest annulus to measure roundness.  We have the inclusion of rings (doubly connected domains) $\ID\setminus\ID(0,L_a)\subset \phi_{-a}(\ID\setminus \Omega_\gamma)\subset \ID\setminus\ID(0,\ell_a)$ and so the monotonicity of modulus implies 
 \[  \log 1/L_a  \leq {\rm mod}(\gamma) \leq  \log 1/\ell_a   \] 
 and roundness is the ratio of these quantities:
 \begin{equation*}
 \nu(\gamma) =  \inf  \; \left\{ \max\left\{ \; \frac{\log 1/\ell }{{\rm mod}(\gamma)}, \, \frac{{\rm mod}(\gamma)}{\log 1/L }\;\right\} \; \right\} \geq 1,
 \end{equation*}
 where the infimum is taken over all $ 0\leq \ell \leq L \leq 1 $ such that there is $a\in \ID$ with $ \phi_a(\gamma)\subset \IA(\ell,L) $ and the winding number $\omega(\gamma,-a)=1$, (equivalently $\omega(\phi_a(\gamma,0)=1$).
 
\begin{figure}[ht!]
\begin{center}
  \includegraphics[width=\textwidth]{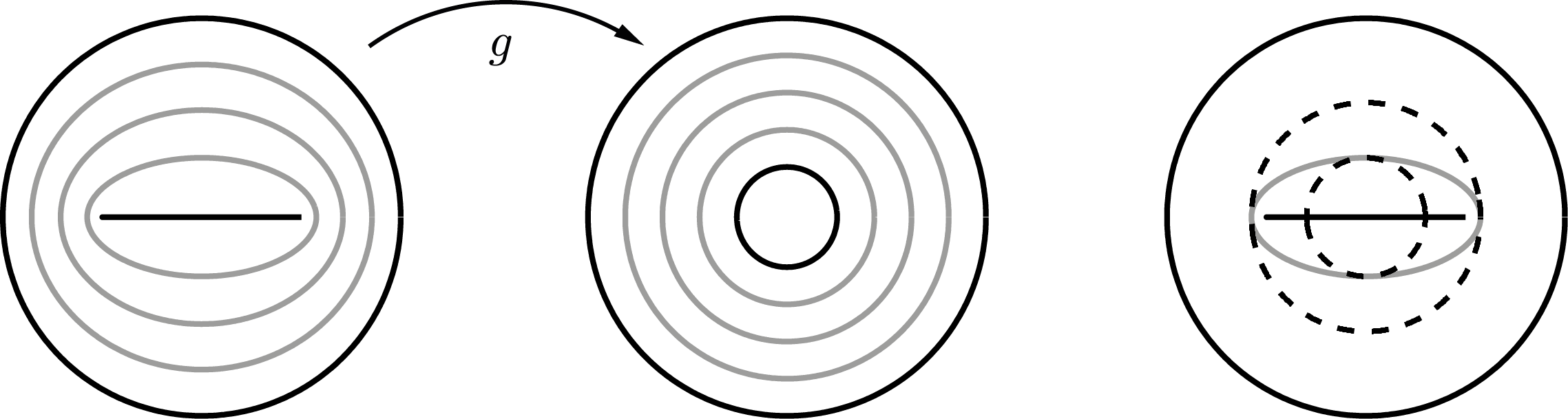}
  \caption{Mapping $g \colon \ID \setminus \{ [-a,a] \} \to \IA(r,1)$ defined in Section \ref{G example}. Roundness of an elliptical regions  $\gamma = g^{-1}(\IS(0,r))$ is illustrated on the right. }\label{Figure2}
\end{center}
\end{figure}
 
Note that $\nu_\gamma=1$ if and only if $\gamma$ is a round circle.  Further,  there is an obvious conformal invariance:  If $ \phi_b(z)$ is a M\"obius transformation of the disk,  
then $\nu(\gamma)=\nu(\phi_b(\gamma))$.  
 
\subsubsection{Roundness of germs}

Let $\mathfrak{g}=(U,\mu)$ be the germ of a quasisymmetric map $g_0:\IS\to\IS$.  We define the roundness of $\mathfrak{g}$ as follows:  Let $g:U\to \IA(r_0,1)$ with $g(\IS)=\IS$ solve the Beltrami equation (\ref{be}).  Thus $r_0={\rm mod}(g(U))$ and for all $r_0<r<1$,  $\gamma_r=g^{-1}(\IS(0,r))$ is a Jordan curve.  We set
 \begin{equation*}
 \nu_\mathfrak{g} =  \inf_{r_0\leq r < 1}\;  \nu(\gamma_r) \; \geq 1.
 \end{equation*}
  
 \subsubsection{Roundness of conformal germs}
 
 Our main interest is in conformal germs for reasons which will soon be clear.  For these germs we can give a formula for the roundness which depends completely on the boundary values alone.  
 
  \begin{theorem}\label{thm1}  Let $\mathfrak{g}=(U,0)$ be the germ of the quasisymmetric map $g_0:\IS\to\IS$.  Then
  \begin{equation*}
  \nu_\mathfrak{g} =\inf_{a\in\ID} \; \max \;  \Big\{ \, \sup_{\zeta\in\IS}(g_0\circ \phi_a)'(\zeta), \,1/\big( \inf_{\zeta\in\IS}(g_0\circ \phi_a)'(\zeta)\big)\; \Big\}.
  \end{equation*}
 \end{theorem}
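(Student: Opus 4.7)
The plan is to reduce the identity to two ingredients: a limit identity coming from boundary asymptotics (giving the easy direction $\leq$), and a Schwarz--Pick / Julia estimate (giving the reverse). First I would exploit M\"obius invariance. For $a\in\ID$, set $\varphi_a=\varphi\circ\phi_a$, which conformally maps $\phi_{-a}(V)$ onto $\IA(r_0,1/r_0)$ with boundary values $g_0\circ\phi_a$. Since $\phi_{-a}$ is a hyperbolic isometry of $\ID$, the hyperbolic radii $\ell_a(\gamma_r),L_a(\gamma_r)$ coincide with the Euclidean extrema $\min|z|,\max|z|$ on $\gamma_r^a:=\varphi_a^{-1}(\IS(0,r))$. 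Abbreviating the max defining $\nu(\gamma_r)$ evaluated at center $a$ by $F(r,a)$, and setting $R(a):=\max\{\sup(g_0\circ\phi_a)',\,1/\inf(g_0\circ\phi_a)'\}$, the theorem reduces to showing $\inf_{r,a}F(r,a)=\inf_a R(a)$.

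Next come the boundary asymptotics. Since $\mu\equiv 0$, Schwarz reflection makes $\varphi_a$ real-analytic across $\IS$. Writing $\varphi_a(e^{i\theta})=e^{i\psi_a(\theta)}$, the identity $e^{i\theta}\varphi_a'(e^{i\theta})\overline{\varphi_a(e^{i\theta})}=\psi_a'(\theta)\in\IR_{>0}$ yields the expansion $|\varphi_a((1-\delta)e^{i\theta})|=1-\delta\psi_a'(\theta)+O(\delta^2)$. Inverting, for $r\nearrow 1$ the curve $\gamma_r^a$ crosses the ray through $e^{i\theta}$ at radius $1-(1-r)/\psi_a'(\theta)+o(1-r)$. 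Setting $m_a=\inf\psi_a', M_a=\sup\psi_a'$, a brief computation using $\log(1/t)\sim 1-t$ at $t=1$ gives $\lim_{r\to 1^-}F(r,a)=\max\{1/m_a,M_a\}=R(a)$, so $\nu_\mathfrak{g}=\inf_{r,a}F(r,a)\leq \inf_a R(a)$.

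For the reverse inequality, I would show $F(r,a)\geq R(a)$ for every $r\in(r_0,1)$; the theorem then follows by taking $\inf_r$ and then $\inf_a$. Normalising $a=0$ with $M=M_0,m=m_0$, it suffices to prove
\[
L_0(\gamma_r)\geq r^{1/M},\qquad \ell_0(\gamma_r)\leq r^{1/m},
\]
since then the two ratios defining $F(r,0)$ are bounded below by $M$ and $1/m$ respectively, giving $F(r,0)\geq\max\{M,1/m\}=R(0)$. To establish the first bound, fix $\theta^*$ with $\psi'(\theta^*)=M$ and consider the holomorphic map $\Phi(\zeta):=e^{-i\theta^*}\varphi^{-1}(\zeta e^{i\psi(\theta^*)})$, defined on a collar of $\IS$ in $\IA(r_0,1/r_0)$; it fixes $1$, satisfies $\Phi'(1)=1/M$, sends $\IA(r_0,1)$ into $\ID$, and maps $\IS$ to $\IS$. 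Applying Julia's lemma at the boundary fixed point $1$ (after conformally straightening a half-disc at $1$ inside this annulus onto $\ID$) gives, after a short computation, $|\Phi(r)|\geq r^{1/M}$. Since $\Phi(r)\in e^{-i\theta^*}\gamma_r$, we deduce $L_0(\gamma_r)\geq r^{1/M}$; the bound on $\ell_0$ is symmetric, using a minimiser of $\psi'$. The principal obstacle will be making the Julia step rigorous in the annular setting, since $\Phi$ is not a disc self-map but only a map between annuli; the hypothesis $\mu\equiv 0$ is essential throughout, as Julia's lemma rests on the Schwarz lemma for conformal maps.
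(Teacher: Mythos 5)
Your first half is sound and is essentially the paper's own argument for the inequality $\nu_\mathfrak{g}\le\inf_a R(a)$: the reflection across $\IS$, the first--order expansion $|\varphi_a((1-\delta)e^{i\theta})|=1-\delta\psi_a'(\theta)+O(\delta^2)$, and the limit $F(r,a)\to R(a)$ as $r\nearrow 1$ all match the paper's ``converse'' step (keep the winding--number constraint on admissible centres $a$ when you write $\nu(\gamma_r)=\inf_a F(r,a)$). The gap is in the reverse inequality. Your target bounds $L_0(\gamma_r)\ge r^{1/M}$ and $\ell_0(\gamma_r)\le r^{1/m}$ are exactly the right ones, but the Julia step you propose does not deliver them, and you have flagged precisely the point that fails. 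Julia's lemma requires a holomorphic self--map of $\ID$, whereas $\Phi$ is defined only on $\IA(r_0,1)$ and $\varphi^{-1}$ has in general no holomorphic extension across the inner boundary of the annulus (the inner boundary of $U$ is an arbitrary continuum). Straightening a subdomain does not repair this: any simply connected $D\subset\IA(r_0,1)$ with $1\in\partial D$ that contains the segment $(r_0,1)$ also has $r_0\in\partial D$, so the horocyclic quantity at $1$ computed in $D$ blows up as $r\to r_0$, and the Julia estimate for $\Phi$ restricted to $D$ becomes vacuous exactly where you need it. Concretely, taking $D$ to be the largest disc in $\IA(r_0,1)$ internally tangent to $\IS$ at $1$, the composed map has angular derivative $\frac{1-r_0}{2M}$ and Julia yields only $\frac{1-|\Phi(r)|}{1+|\Phi(r)|}\le\frac{(1-r_0)(1-r)}{2M(r-r_0)}$, which gives nothing as $r\to r_0$. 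Moreover your pointwise claim $|\Phi(r)|\ge r^{1/M}$ along the single extremal ray is stronger than what is needed (only the maximum of $|z|$ over the whole curve $\gamma_r^a$ must exceed $r^{1/M}$) and is not established by any local estimate at the one boundary point.

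What closes the gap -- and is the paper's proof of this direction -- is a global hyperbolic--metric comparison rather than a one--point boundary Schwarz lemma. Reflect and set $\Omega=\varphi_a^{-1}(\IA(r,1/r))$; by the winding hypothesis $\IA(L,1/L)\subset\Omega\subset\IA(\ell,1/\ell)$ with $\ell=\ell_a(\gamma_r)$, $L=L_a(\gamma_r)$. Pulling back the hyperbolic density of $\IA(r,1/r)$, which equals $\frac{\pi}{2\log(1/r)}$ on $\IS$, gives $d_\Omega(\zeta)=\frac{\pi}{2\log(1/r)}\,|\varphi_a'(\zeta)|$ for $\zeta\in\IS$, while monotonicity of the hyperbolic density under inclusion gives $\frac{\pi}{2\log(1/\ell)}\le d_\Omega(\zeta)\le\frac{\pi}{2\log(1/L)}$ there. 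Hence $\frac{\log(1/r)}{\log(1/\ell)}\le(g_0\circ\phi_a)'(\zeta)\le\frac{\log(1/r)}{\log(1/L)}$ for every $\zeta\in\IS$ simultaneously, which is exactly $L\ge r^{1/M}$, $\ell\le r^{1/m}$, i.e.\ $F(r,a)\ge R(a)$, for all $r\in(r_0,1)$ -- no Julia lemma, no extremal angle, and valid uniformly down to the inner boundary. Replacing your Julia step by this comparison turns your outline into a complete proof.
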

 \noindent{\bf Proof.} Let $g:U\to \IA(r_0,1)$,  $r_0=e^{-{\rm mod}(U)}$,  be conformal with boundary values $g_0:\IS\to\IS$.  For $r\in (r_0,1]$ set $\gamma_r=g^{-1}(\IS(0,r))$. We extend $g$ by reflection to $g:U\cup\IS\cup U^*\to\IA(r_0,1/r_0)$. 
 The hyperbolic metric density of the annulus  $\IA_r=\IA(r,1/r)$ can be found in \cite[\S 12.2]{BeardonMinda} as
\begin{equation*}
d_{\IA_r}(z) = \frac{\pi}{2 \log(1/r)}  \; \frac{1}{|z| \cos\Big(\frac{\pi\log|z|}{2\log(1/r)}  \Big)}.
\end{equation*}
We consider the roundness of $\gamma_r$.  From the definition,  there is $a\in \ID$ and an annulus $\IA(\ell,L)$ with $\phi_a(\gamma_r)\subset \IA(\ell,L)$ and also the winding number $\omega(\phi_a(\gamma_r),0)=1$.  We set $\tilde{g}=g\circ \phi_a$.  Now,  as $\tilde{g}$ is conformal,  the ring  $\Omega = \tilde{g}^{-1}(\IA(r,1/r))$ has $\IA(L,1/L) \subset \Omega \subset \IA(\ell,1/\ell)$ and has hyperbolic density
\begin{equation}\label{hyperbolic density}
   d_\Omega(\tilde{g}^{-1}(\zeta)) |(\tilde{g}^{-1})'(\zeta)| =\frac{\pi}{2 \log(1/r)},  \hskip15pt  |\zeta|=1
\end{equation} 
 while the monotonicity property of the hyperbolic metric gives
\begin{equation}\label{hyperbolic monotonicity}
   \frac{\pi}{2 \log(1/\ell)} \leq d_\Omega(\eta) \leq  \frac{\pi}{2 \log(1/L)},  \hskip15pt  |\eta|=1
\end{equation} 
 and hence
\begin{equation*}
  \frac{\log(1/r)}{\log(1/\ell)} \leq |\tilde{g}'(\zeta)| \leq  \frac{\log(1/r)}{  \log(1/L)},  \hskip15pt  |\zeta|=1.
  \end{equation*} 
If we take the infimum over all Jordan curves $\gamma_r$ and annuli we see
$\nu_{\mathfrak{g}}^{-1} \leq |\tilde{g}'(\zeta)|  \leq   \nu_{\mathfrak{g}}$.   This is the first part of the result.
 
The converse inequality follows from an elementary first order analysis for $\tilde{g}$ on the circle.  This map is conformal in a neighbourhood of the circle and so
  \begin{equation*}
  |\tilde{g}(r\zeta)-\tilde{g}(\zeta)| =  |\tilde{g}'(\zeta)|(1-r)+ O(1-r) = |\tilde{g}'(\zeta)| \log(1/r) + O(1-r)
 \end{equation*}
 and so for $|\zeta|=1$ and $r$ close to $1$ we have with
$L_r= \sup_{|\zeta|=1} |\tilde{g}^{-1}(\zeta)|$ and $\ell_r =\inf_{|\zeta|=1} |\tilde{g}^{-1}(\zeta)|$, 
\begin{eqnarray*}
\log 1/L_r = -\log (1 - \inf_{|\zeta|=1} |\tilde{g}(r\zeta)-\tilde{g}(\zeta)|) \approx |\tilde{g}'(\zeta)|\log(1/r) \end{eqnarray*}
 and from this the result follows directly as we let $r\nearrow 1$.  Of course a similar estimate holds for $\log 1/\ell_r$.
 \hfill $\Box$
 
 \medskip
 
 We now obtain the following corollary of Lemma \ref{ext1} when applied to  $g_0\circ \phi_a$.
   \begin{corollary}\label{cor1}  Let $\mathfrak{g}=(U,0)$ be the germ of the quasisymmetric map $g_0:\IS\to\IS$.  Then $g_0$ admits a $K_G$--quasiconformal extension  $G:\ID\to\ID$ for which the distortion of $G$ satisfies
  \begin{equation*}
  K_G \leq \nu_\mathfrak{g}.
  \end{equation*}
 \end{corollary}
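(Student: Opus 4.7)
The plan is to combine the formula for $\nu_\mathfrak{g}$ supplied by Theorem \ref{thm1} with the radial extension of Lemma \ref{ext1}, with the Möbius pre-composition absorbed afterwards. Since $\mathfrak{g}=(U,0)$ is a conformal germ, $g_0:\IS\to\IS$ is a real analytic diffeomorphism. For each $a\in\ID$ the map $\phi_a$ is Möbius, so $h_a=g_0\circ\phi_a|_\IS$ is again a real analytic diffeomorphism of $\IS$, of the form $h_a(e^{i\theta})=e^{if_a(\theta)}$ with $f_a$ strictly increasing and $f_a'$ bounded away from $0$ and $\infty$. In particular, setting $\ell_a=\inf_{\zeta\in\IS}(g_0\circ\phi_a)'(\zeta)$ and $L_a=\sup_{\zeta\in\IS}(g_0\circ\phi_a)'(\zeta)$, these are precisely $\inf f_a'$ and $\sup f_a'$, so $h_a$ satisfies the hypotheses of Lemma \ref{ext1}.

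Next, I would apply Lemma \ref{ext1} to $h_a$ to produce a radial quasiconformal extension $H_a:\ID\to\ID$ with $H_a|_\IS=h_a$ and
\begin{equation*}
K_{H_a}\leq \max\bigl\{L_a,\, 1/\ell_a\bigr\}.
\end{equation*}
Then set $G_a = H_a\circ\phi_a^{-1}:\ID\to\ID$. Because $\phi_a^{-1}$ is a conformal self-map of $\ID$, pre-composing preserves quasiconformal distortion, so $G_a$ is quasiconformal with $K_{G_a}=K_{H_a}$, and on $\IS$ we have $G_a|_\IS = h_a\circ\phi_a^{-1}=g_0$. Thus for every $a\in\ID$ we have produced a quasiconformal extension of $g_0$ whose distortion is controlled by $\max\{L_a,1/\ell_a\}$.

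Finally I would optimize over $a\in\ID$. By Theorem \ref{thm1},
\begin{equation*}
\inf_{a\in\ID}\;\max\bigl\{L_a,\, 1/\ell_a\bigr\} \;=\; \nu_\mathfrak{g},
\end{equation*}
so if the infimum is attained at some $a^*$ we simply take $G=G_{a^*}$ and are done. Otherwise, choose $a_n\in\ID$ with $\max\{L_{a_n},1/\ell_{a_n}\}\searrow\nu_\mathfrak{g}$; the corresponding family $\{G_{a_n}\}$ has uniformly bounded distortion and common boundary values $g_0$, so a standard normal families argument for quasiconformal self-maps of $\ID$ yields a subsequential limit $G:\ID\to\ID$ which is $\nu_\mathfrak{g}$-quasiconformal and still extends $g_0$.

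The only real obstacle is the attainment question for the infimum defining $\nu_\mathfrak{g}$: as $a$ is allowed to range over the non-compact disk $\ID$, one cannot invoke direct compactness of the parameter space. This is handled by the normal families observation in the last paragraph; the uniform distortion bound $\nu_\mathfrak{g}+\varepsilon$ on the sequence $G_{a_n}$, together with their fixed boundary values, is exactly what is needed to pass to a limit with distortion at most $\nu_\mathfrak{g}$.
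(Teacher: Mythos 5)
Your proposal is correct and follows essentially the same route as the paper, which obtains Corollary \ref{cor1} precisely by applying the radial extension of Lemma \ref{ext1} to $g_0\circ\phi_a$, post-composing with $\phi_a^{-1}$, and invoking the formula of Theorem \ref{thm1}. Your closing normal-families argument to pass from $\nu_\mathfrak{g}+\varepsilon$ to $\nu_\mathfrak{g}$ when the infimum over $a\in\ID$ is not attained is a correct refinement of a point the paper leaves implicit.
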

 We can put this another way by considering the inverse map.
 \begin{corollary}\label{corext1}
 Let $\varphi:\IA(r,1) \to \ID$ be a conformal mapping with $\varphi(\IS)=\IS$. Then $\varphi|\IS$ admits a $K_\Phi$--quasiconformal extension $\Phi:\ID\to\ID$,  $\Phi|\IS=\varphi$, with $K_\Phi \leq \nu_\varphi$.
\end{corollary}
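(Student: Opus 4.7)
The plan is to reduce Corollary \ref{corext1} directly to Corollary \ref{cor1} by passing to the inverse mapping. First I would set $U := \varphi(\IA(r,1))\subset \ID$; since $\varphi(\IS)=\IS$, this $U$ is an open doubly connected subset of $\ID$ with $\IS\subset\partial U$. The map $\varphi^{-1}:U\to \IA(r,1)$ is conformal, hence satisfies the Beltrami equation (\ref{be}) with $\mu\equiv 0$, so $\mathfrak{g}:=(U,0)$ is a conformal germ in the sense of Section 2. Its associated boundary quasisymmetric map, obtained by reflection across $\IS$ exactly as in the construction preceding Lemma \ref{lemma1}, is $g_0 = \varphi^{-1}|\IS = (\varphi|\IS)^{-1}$.

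Second, I would verify that the roundness invariants agree, namely $\nu_\varphi = \nu_\mathfrak{g}$. Both quantities are defined as the infimum over $s\in(r,1)$ of the roundness $\nu(\gamma_s)$ of the same family of Jordan curves, $\gamma_s = \varphi(\IS(0,s)) = (\varphi^{-1})^{-1}(\IS(0,s))\subset U$. Since $\nu(\gamma)$ is a M\"obius-invariant property of the curve itself and does not refer to the particular conformal parametrisation, this step is essentially notational reconciliation.

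Third, Corollary \ref{cor1} furnishes a $K_G$-quasiconformal extension $G:\ID\to\ID$ of $g_0$ with $K_G\leq \nu_\mathfrak{g}$. Setting $\Phi := G^{-1}:\ID\to\ID$, the inverse of a planar $K$-quasiconformal homeomorphism is again $K$-quasiconformal with the same maximal distortion, and $\Phi|\IS = g_0^{-1} = \varphi|\IS$. Hence $\Phi$ is the required extension with $K_\Phi\leq \nu_\mathfrak{g} = \nu_\varphi$.

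I do not expect any serious obstacle: the corollary is essentially a formal inverse restatement of Corollary \ref{cor1}. The only mildly delicate point is the symmetry of roundness under inversion of the parametrisation, which is immediate because $\nu(\gamma)$ depends only on the image curve $\gamma\subset\ID$. Once that observation is recorded, the proof collapses to the single sentence \emph{apply Corollary \ref{cor1} to $\varphi^{-1}$ and invert the resulting extension}.
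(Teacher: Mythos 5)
Your proposal is correct and matches the paper's intent exactly: the paper states Corollary \ref{corext1} as an immediate inverse restatement of Corollary \ref{cor1} (``We can put this another way by considering the inverse map''), which is precisely your reduction via the germ $(\varphi(\IA(r,1)),0)$, the identification $\nu_\varphi=\nu_\mathfrak{g}$ through the common family of curves $\varphi(\IS(0,s))$, and the fact that the inverse of a $K$--quasiconformal map is $K$--quasiconformal. No further comment is needed, beyond noting that the normalisation $g_0(1)=1$ in the germ construction only costs a rotation and so does not affect the distortion bound.
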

 
 We are now able to obtain a more general result as follows. 
  
    \begin{corollary}\label{cor3}  Let $\mathfrak{g}=(U,\mu)$ be the germ of the quasisymmetric map $g_0:\IS\to\IS$,  and 
    \begin{equation*}
    K_0=\Bigg\|\frac{1+|\mu|}{1-|\mu|}\Bigg\|_{L^\infty(U)}.
    \end{equation*}
     Then $g_0$ admits a $K_G$--quasiconformal extension  $G:\ID\to\ID$ for which the distortion of $G$ satisfies
  \begin{equation*}
  K_G \leq K_0 \; \nu_\mathfrak{g}.
  \end{equation*}
 \end{corollary}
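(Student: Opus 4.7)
The strategy is to reduce Corollary~\ref{cor3} to Corollary~\ref{cor1} by absorbing the Beltrami coefficient into an auxiliary $K_0$-quasiconformal self-map of $\ID$. Let $g:U\to\IA(r_0,1)$ denote the $K_0$-quasiconformal Beltrami solution for the germ $(U,\mu)$, with $g|\IS=g_0$. Extending $\mu$ to all of $\ID$ by setting $\mu\equiv 0$ on $\ID\setminus\overline U$, the measurable Riemann mapping theorem provides a normalized $K_0$-quasiconformal self-homeomorphism $\beta:\ID\to\ID$ whose Beltrami coefficient matches $\mu$ on $U$ and vanishes elsewhere. By Stoilow factorization, the composition $\tilde g:=g\circ\beta^{-1}:\tilde U\to\IA(r_0,1)$ on $\tilde U:=\beta(U)$ is conformal, so $(\tilde U,0)$ is a conformal germ whose boundary map on $\IS$ is $\tilde g|\IS = g_0\circ\beta_0^{-1}$, where $\beta_0:=\beta|\IS$ is quasisymmetric.

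Next, applying Corollary~\ref{cor1} to the conformal germ $(\tilde U,0)$ yields a quasiconformal extension $\tilde G:\ID\to\ID$ of $g_0\circ\beta_0^{-1}$ with $K_{\tilde G}\leq \nu_{(\tilde U,0)}$. Since $\beta$ itself is a $K_0$-quasiconformal extension of $\beta_0$, the composition $G:=\tilde G\circ\beta:\ID\to\ID$ extends $g_0$ with distortion $K_G\leq K_0\cdot \nu_{(\tilde U,0)}$.

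To conclude, I must verify $\nu_{(\tilde U,0)}\leq \nu_{\mathfrak g}$. The curves entering the definition of $\nu_{(\tilde U,0)}$ are $\tilde g^{-1}(\IS(0,\rho))=\beta(\gamma_\rho)$, where the $\gamma_\rho=g^{-1}(\IS(0,\rho))$ are the curves defining $\nu_{\mathfrak g}$. Given a realizing hyperbolic annulus $\phi_a(\IA(\ell,L))\supset \gamma_\rho$ near the infimum, one passes to $\beta(\gamma_\rho)$ by enclosing it in a hyperbolic annulus around $\beta(a)$; because $\beta$ is conformal on $\ID\setminus\overline U$ the center can be chosen freely there, and the new Euclidean radii are controlled by the modulus distortion inequality $\mathrm{mod}(\beta(\gamma_\rho))\in[\mathrm{mod}(\gamma_\rho)/K_0,\,K_0\,\mathrm{mod}(\gamma_\rho)]$ combined with the inclusions $\log(1/L_a)\leq\mathrm{mod}(\gamma_\rho)\leq\log(1/\ell_a)$. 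Taking the infimum over $\rho$ then gives $\nu_{(\tilde U,0)}\leq \nu_{\mathfrak g}$, completing the estimate $K_G\leq K_0\cdot\nu_{\mathfrak g}$.

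The main obstacle is this last roundness comparison: hyperbolic annuli are not preserved by $K_0$-quasiconformal maps, so one must carefully enclose $\beta(\gamma_\rho)$ in a new hyperbolic annulus without inflating the ratio in the definition of $\nu$. Making this geometric comparison precise, by coordinating the conformality of $\beta$ on $\ID\setminus\overline U$ with the standard modulus distortion on $U$, is the technical heart of the argument.
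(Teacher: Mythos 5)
Your reduction stands or falls on the final claim $\nu_{(\tilde U,0)}\leq\nu_{\mathfrak g}$, and that claim is false in general, so the proof has a genuine gap. The curves defining the roundness of the conformal germ $(\tilde U,0)$ are exactly the images $\beta(\gamma_\rho)$ of the curves $\gamma_\rho=g^{-1}(\IS(0,\rho))$ defining $\nu_{\mathfrak g}$; but these curves lie in $U$, precisely where $\mu$ is supported, so $\beta$ is genuinely $K_0$-quasiconformal (not conformal) in a neighbourhood of each $\gamma_\rho$. Roundness is only a M\"obius invariant, and a quasiconformal map can increase it without bound. Concretely, run your construction backwards: take $a$ close to $1$, let $\psi:\ID\setminus[-a,a]\to\IA(r_0,1)$ be conformal with $\psi(\IS)=\IS$, and choose a quasiconformal $\beta:\ID\to\ID$, $\beta(\IS)=\IS$, which carries some round circle onto one of the curves $\psi^{-1}(\IS(0,\rho^*))$ (a thin hyperbolic ellipse); set $U=\beta^{-1}(\ID\setminus[-a,a])$ and $\mu=\mu_\beta|U$. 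Then $\gamma_{\rho^*}$ is a round circle, so $\nu_{\mathfrak g}=1$, while $\tilde U=\beta(U)=\ID\setminus[-a,a]$ has $\nu_{(\tilde U,0)}=\frac{1+a^2}{1-a^2}$, arbitrarily large. Your proposed repair via quasi-invariance of modulus cannot close this: knowing ${\rm mod}(\beta(\gamma_\rho))$ up to a factor $K_0$ does not control the roundness of $\beta(\gamma_\rho)$ at all (a curve with prescribed complementary modulus can be arbitrarily far from round), and even if it did, the extra factors of $K_0$ would degrade the final bound to something strictly worse than $K_0\,\nu_{\mathfrak g}$.

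The paper avoids transporting roundness through the non-conformal map altogether. Given $\epsilon>0$ it chooses $r$ with $\nu(\gamma_r)<\nu_{\mathfrak g}+\epsilon$ and works with the conformal germ $\mathfrak h=(g^{-1}(\IA(r,1)),0)$, whose inner boundary component is the curve $\gamma_r$ itself in the \emph{source}; hence $\nu_{\mathfrak h}\leq\nu(\gamma_r)<\nu_{\mathfrak g}+\epsilon$ with no quasiconformal image involved, and Corollary \ref{cor1} extends $\varphi^{-1}|\IS$ (where $\varphi:\IA(s,1)\to g^{-1}(\IA(r,1))$ is conformal) with distortion at most $\nu_{\mathfrak g}+\epsilon$. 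The factor $K_0$ enters only through $g\circ\varphi:\IA(s,1)\to\IA(r,1)$, a quasiconformal map between round annuli taking boundary circles to boundary circles, which extends to $\ID$ by repeated reflection with unchanged distortion; composing the two extensions gives boundary values $g_0$ and $K_G\leq K_0(\nu_{\mathfrak g}+\epsilon)$. Your first two steps (factoring through a global Beltrami solution and composing extensions) are sound; the fix is to apply Corollary \ref{cor1} to a ring whose inner boundary is one of the original curves $\gamma_r$, rather than to the straightened germ $(\beta(U),0)$.
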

 \noindent{\bf Proof.}  Let $g:U\to\ID$ with boundary extension $g|\IS=g_0$ be the quasiconformal map determined by the germ $(U,\mu)$.  Let $\epsilon>0$ and $r$ chosen so that $\nu(g^{-1}(\IS(0,r))<\nu_{\mathfrak{g}}+\epsilon$.  Let $\varphi:\IA(s,1)\to g^{-1}(\IA(r,1))$ be a conformal map,  $\log(1/s)={\rm mod}(g^{-1}(\IA(r,1)) )$.  Now the map $g\circ \varphi: \IA(s,1) \to \IA(r,1)$ is a quasiconformal homeomorphism and can be extended by repeated reflection to a quasiconformal map $\widetilde{g\circ \varphi}:\ID\to\ID$ with the same maximal distortion as $g|g^{-1}(\IA(r,1))$.  Next, by definition $\nu_{\mathfrak{h}}$,  corresponding to the germ ${\mathfrak{h}}= (g^{-1}(\IA(r,1)),0)$ has boundary values $\varphi^{-1}$ (up to rotation).  Also,  $\nu_{\mathfrak{h}} \leq \nu (g^{-1}(\IS(0,r)))<\nu_{\mathfrak{g}}+\epsilon$,  as the roundness of $\nu_{\mathfrak{h}} $ is certainly smaller than the roundness of the inner boundary component.  Corollary \ref{cor1} implies that $\varphi^{-1}|\IS$ has an extension $\widetilde{\varphi^{-1}}:\ID\to\ID$ with distortion no more than $\nu_{\mathfrak{g}}+\epsilon$. Therefore the distortion of 
 \[ \widetilde{g\circ \varphi} \circ \widetilde{\varphi^{-1}}:\ID\to\ID \]
 is no more than $ (\nu_{\mathfrak{g}}+\epsilon)\, K_0$.  The result follows once we observe that the boundary values of this map are those of $g_0$. \hfill $\Box$
 
 \medskip
 
 We are now able to prove the following theorem which seems quite remarkable (even for conformal mappings) in light of how complicated the image of an inner boundary component might be.
 
 \begin{theorem}\label{spacefilling}  Let $f:A=\IA(r_0,1)\to \ID$ be $K_0$-quasiconformal with $f(\IS)=\IS$ and $f(0)\not\in f(A)$.  Let $L = e^{-{\rm mod}(f(A))}$ and let
  \begin{equation*}
S = \inf_{\zeta\in \IS} \; \liminf_{r\to r_0} |f(r\zeta)|,  \hskip10pt T =  \sup_{\zeta\in \IS} \; \limsup_{r\to r_0} |f(r\zeta)|  
 \end{equation*}
 Then there is a $K$-quasiconformal mapping $F:\ID\to\ID$ with $F|\IS=f$ and
 \[ K \leq \max \left\{ \frac{\log S}{\log L}, \frac{\log L}{\log T} \right\} \; K_0. \]
 If $f$ is conformal we may put $K_0=1$ and $L=r_0$.
 \end{theorem}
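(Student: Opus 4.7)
The plan is to decompose $f$ and extend each factor separately. Let $g \colon U := f(A) \to \IA(L,1)$ be a conformal uniformization with $g(\IS) = \IS$, so that $\phi := g \circ f \colon \IA(r_0,1) \to \IA(L,1)$ is $K_0$-quasiconformal and on $\IS$ we have the identity $f|\IS = g^{-1}|\IS \circ \phi|\IS$. The desired extension will be $F := G^{-1} \circ \widetilde\phi$, where $G \colon \ID \to \ID$ is a quasiconformal extension of $g|\IS$ supplied by Corollary \ref{cor1} applied to the germ $(U,0)$, and $\widetilde\phi \colon \ID \to \ID$ is a $K_0$-quasiconformal extension of $\phi$.

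To construct $\widetilde\phi$, note that the annulus-to-annulus quasiconformal homeomorphism $\phi$ extends to a homeomorphism $\overline{\IA(r_0,1)} \to \overline{\IA(L,1)}$. Iteratively reflecting across the circles $r_0^{2^n}\IS$ in the domain and $L^{2^n}\IS$ in the range produces a $K_0$-quasiconformal map on $\ID \setminus \{0\}$, which extends to a $K_0$-quasiconformal map $\widetilde\phi \colon \ID \to \ID$ by the removable singularity theorem for bounded quasiconformal maps. Corollary \ref{cor1} gives the quasiconformal extension $G \colon \ID \to \ID$ of $g|\IS$ with distortion at most $\nu_{(U,0)}$, so $F := G^{-1} \circ \widetilde\phi$ is quasiconformal with $K_F \leq K_0 \cdot \nu_{(U,0)}$ and satisfies $F|\IS = f|\IS$.

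The remaining task is to prove $\nu_{(U,0)} \leq \max\{\log S/\log L,\, \log L/\log T\}$. A compactness and continuity argument on $\IS$ using the definitions of $S$ and $T$ shows that the inner boundary $E := \partial U \setminus \IS$ satisfies $E \subset \{S \leq |w| \leq T\}$, so $\IA(T,1) \subset U \subset \IA(S,1) \cup \IS$. Reflecting $g$ across $\IS$ gives a conformal map $g \colon \widetilde U \to \IA(L,1/L)$, where $\widetilde U := U \cup \IS \cup U^*$, and at $\zeta \in \IS$ the hyperbolic density relation reads
\[ d_{\widetilde U}(\zeta) \;=\; \frac{\pi}{2\log(1/L)}\,|g'(\zeta)|. \]
The nested inclusions $\IA(T,1/T) \subset \widetilde U \subset \IA(S,1/S)$ together with monotonicity of hyperbolic density give
\[ \frac{\pi}{2\log(1/S)} \;\leq\; d_{\widetilde U}(\zeta) \;\leq\; \frac{\pi}{2\log(1/T)}, \]
hence $\log L/\log S \leq |g'(\zeta)| \leq \log L/\log T$. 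Choosing $a = 0$ in the formula of Theorem \ref{thm1} then delivers the claimed bound on $\nu_{(U,0)}$. When $f$ itself is conformal, $\mathrm{mod}(U) = \mathrm{mod}(A)$ forces $L = r_0$ and $K_0 = 1$.

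The main technical obstacle is establishing $E \subset \{S \leq |w| \leq T\}$ without assuming continuous extension of $f$ to $r_0 \IS$: a point $w \in E$ arises as $w = \lim f(\rho_n \zeta_n)$ with $\rho_n \to r_0$ and the $\zeta_n$ possibly varying, so one must pass to a subsequence $\zeta_n \to \zeta$ and exploit continuity of $f$ together with compactness of $\IS$ to convert the pointwise bounds defining $S$ and $T$ into uniform estimates of $|f|$ on a collar neighborhood of the inner boundary; once this is done the hyperbolic-density comparison is essentially automatic.
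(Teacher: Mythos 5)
Your argument is correct and lands on the same circle of ideas as the paper, but it packages them differently. The paper's proof is one line of reduction: form the germ $\mathfrak{g}=(U,\mu_{f^{-1}})$, observe that its roundness is controlled by the roundness of the image curves $f(r\,\IS)$, which for $r$ near $r_0$ lie essentially in $\IA(S,T)$ and have $\Mod \to \log 1/L$, and then quote Corollary \ref{cor3}. You instead unpack Corollary \ref{cor3} by hand --- the factorization $f=g^{-1}\circ\phi$ with $\phi=g\circ f$ extended by repeated reflection across $r_0^{2^n}\IS$ and $L^{2^n}\IS$ plus the removable singularity at $0$ is exactly the reflection step the paper uses inside that corollary --- and you bound the roundness of the conformal germ $(U,0)$ directly, via the sandwich $\IA(T,1)\subset U\subset\{S\le|w|<1\}$ and the hyperbolic-density comparison at $\IS$, i.e.\ via the mechanism of Theorem \ref{thm1} with $a=0$, rather than via the curves $\gamma_r$ and the infimum over $r$ in the definition of $\nu_\mathfrak{g}$. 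The two routes give the same constant $\max\{\log S/\log L,\ \log L/\log T\}\,K_0$; yours has the small advantage of needing only the position of the inner complementary component of $U$, not the individual curves $f(r\,\IS)$, while the paper's is shorter because the decomposition is already encapsulated in Corollary \ref{cor3}. Two remarks. First, your inclusion $U\subset\{S\le|w|\}$ uses the hypothesis $f(0)\notin f(A)$ (i.e.\ $0$ lies in the bounded complementary component, so the disk $\ID(0,S)$, being connected, disjoint from the inner boundary and containing $0$, sits inside that component); it would be worth saying this explicitly, since it is also what legitimizes the choice $a=0$ and the winding-number condition. Second, the step you flag as the main obstacle --- converting the pointwise radial bounds defining $S$ and $T$ into the statement that the inner boundary continuum lies in $\{S\le|w|\le T\}$ --- is genuinely where care is needed (``continuity plus compactness'' alone does not interchange $\inf_\zeta$ with $\liminf_{r\to r_0}$), but the paper's own proof asserts precisely this containment without further argument, so your treatment is at the same level of rigour as the original.
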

 \noindent{\bf Proof.}  We put $U=f(A)$ and $\mu = \mu_{f^{-1}}$ to define the germ $\mathfrak{g}=(U,\mu)$.  Now the roundness $\nu_\mathfrak{g}$ is smaller than the roundness of $\gamma_r = (f^{-1})^{-1}(\IS(r))=f(\IS(r))$ and this is rounder than the inner boundary component which lies in the annulus $\IA(S,T)$.  The condition $f(0)\not\in f(A)$ with $f(\IS)=\IS$ guarantees the winding number about $0$ is equal to $1$.  The result now follows from Corollary \ref{cor3}.\hfill $\Box$
 
 \medskip
 
 The fact that modulus increases under inclusion means that under the circumstances of the theorem $S\leq L \leq T$.  Thus if $|S-T|<\delta$ we see that for fixed $L$ our bound has the behaviour
  \[ K   \approx \left(1+\frac{\delta}{L\log 1/L}\right) \; K_0, \hskip5pt \mbox{ as $\delta\to 0$}. \] 
 However,  one might reasonably expect a $\delta^2$ term here.
 
\begin{figure}[ht!]
\begin{center}
  \includegraphics[width=.9 \textwidth]{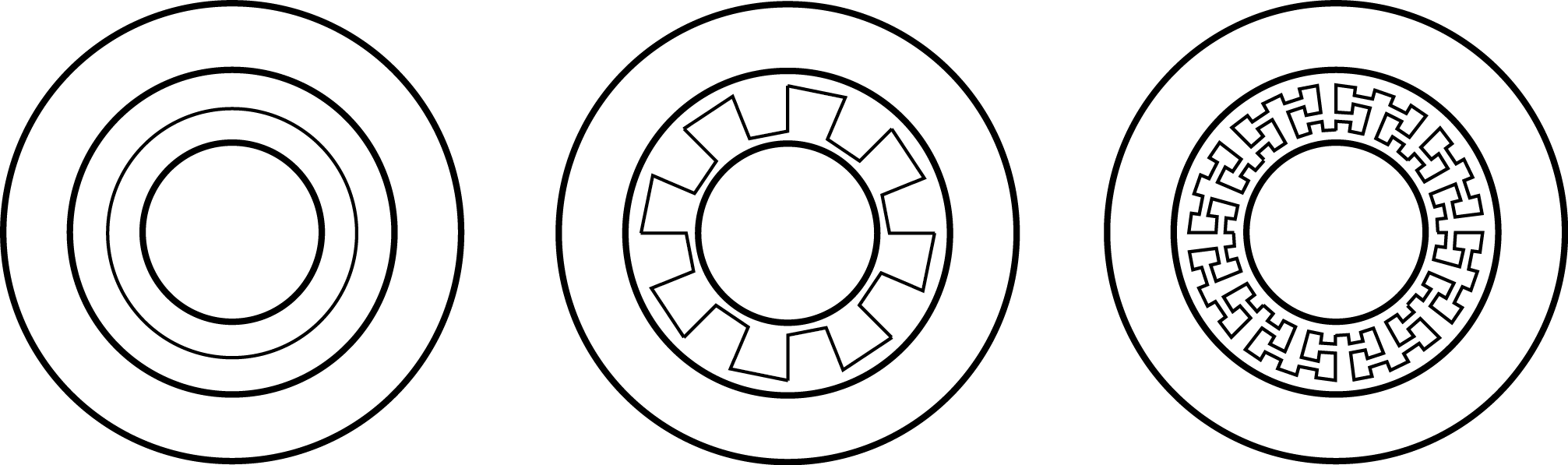}
  \caption{In Theorem \ref{spacefilling} the image of the inner boundary component can be complicated as long as the winding number about the origin is 1.  This is illustrated here with the images of $\{|z|=5/12\}$  lying in the annulus $\IA(1/3,1/2)$ and converging to a space filling curve filling that region.  There is a uniform bound $K\leq \frac{\log 3}{\log 2}$ on the distortion of the extension of any of these boundary values.}
\end{center}
\end{figure}

 \section{Roundness and modulus}
 
What we need in our applications is an estimate on the roundness of a germ $\nu_\mathfrak{g}=(U,\mu)$ in terms of ${\rm mod}(U)$ and $k_{\mathfrak{g}}=\|\mu\|_{L^{\infty}(U)}$ and a useful feature of such an estimate must be that 
\[ \nu_\mathfrak{g} \searrow \frac{1+k_{\mathfrak{g}}}{1-k_{\mathfrak{g}}},  \hskip10pt \mbox{ as ${\rm mod}(U)\to\infty$.}\] 

Our argument will give reasonable results when ${\rm mod}(U)$ is large.   We first deal with the conformal case.  Let $\ID_\rho(a,s)$ be the smallest hyperbolic disk  which contains the inner boundary component of $U$.  Such a unique disk exists because of the negative curvature of the hyperbolic plane. If we put $V=\phi_a(U)$,  then we may as well work with the germ $\mathfrak{g}=(V,0)$.   Then,  let $g:\Omega = V\cup\IS\cup V^* \to \IA(r_0,1/r_0)$ be conformal,  $g(\IS)=\IS$.  If $\log\frac{1+\ell}{1-\ell}=s$,  then $\IA(\ell,1)\subset V$. Next comparing hyperbolic densities as in \eqref{hyperbolic density} and using the monotonicity of the density with respect to domains as above \eqref{hyperbolic monotonicity}, we have for $|\zeta|=1$,
\begin{equation*}
  \frac{\log(1/L)}{\log(1/r_0)} \leq |g'(\zeta)| \leq  \frac{  \log(1/\ell)}{\log(1/r_0)} .
  \end{equation*}
Here we have set $L = \sup_{|z|=\ell} |g(z)|$.

 We now should estimate both $\ell$ and $L$ in terms of $r_0$.  For instance as $\ID(0,\ell)$ is the smallest disk (hyperbolic or euclidean) containing the inner component of $U$ we know that the diameter of this component is at most $\ell/\sqrt{2}$ and by the extremity of the Gr\"otzsch ring we have 
 \[ \log 1/r_0 = {\rm mod}(U) \leq {\rm mod} \left( R_G \left( \ell/\sqrt{2} \right) \right) .\]
 It is possible to make further estimates in this way,  but these lead to quite complicated formulas.  We give up the possibility of sharpness for a simple formula
\[
 \nu_\mathfrak{g} \leq \max_{|\zeta|=1} \left\{|g'(\zeta)|, \frac{1}{|g'(\zeta)|} \right\}  \leq \frac{\max_{|\zeta|=1} |g'(\zeta)|}{\min_{|\zeta|=1} |g'(\zeta)|} 
 \leq \frac{\log 1/r_0}{\log 1/L} =     \frac{\log 1/r_0}{\log1/r_0 - \log  L/r_0}.
\]
The components of the boundary of the image of the annulus $\IA(L,r_0)$ under the conformal mapping $g^{-1}$ both touch the circle $\{|z|=\ell\}$. A little geometry \cite[Lemma 1.3]{Martin89} and the extremality of the Teichm\"uller ring gives us the uniform estimate 
\begin{equation*}
{\rm mod}(\IA(L,r_0)) = \log L/r_0 \leq  m_T \left( \sqrt{2} \right) =  2.4984 \ldots = \beta_0.
\end{equation*}
We obtain the following lemma and its corollary (when we consider the inverse).
\begin{lemma} \label{lemma4} Let $\mathfrak{g}=(U,0)$ be a germ of a quasisymmetric mapping.  Then
\begin{equation}\label{r0est}
  \nu_\mathfrak{g} \leq \frac{{\rm mod}(U)}{{\rm mod}(U)-\beta_0}, \hskip15pt \beta_0 =  2.4984\ldots.
\end{equation} 
\end{lemma}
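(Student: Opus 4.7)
The plan is to synthesize the chain of estimates developed in the paragraphs immediately preceding the statement into a single clean bound. The first move uses the Möbius invariance of roundness: let $\ID_\rho(a,s)$ be the \emph{smallest} hyperbolic disk containing the inner boundary component of $U$, and replace $U$ by $V=\phi_a(U)$. After this normalization the inner boundary component sits inside a Euclidean disk $\ID(0,\ell)$ with $\ell=\tanh(s/2)$, and no smaller concentric disk contains it. Since $\mu\equiv 0$ I then take the Riemann-type map $g\colon V\cup\IS\cup V^*\to\IA(r_0,1/r_0)$ with $r_0=e^{-{\rm mod}(U)}$, extended by reflection. Writing $L=\sup_{|z|=\ell}|g(z)|$, I sandwich the ring $g^{-1}(\IA(r_0,1/r_0))$ between $\IA(L,1/L)$ and $\IA(\ell,1/\ell)$; the hyperbolic density comparison \eqref{hyperbolic density}--\eqref{hyperbolic monotonicity} from the proof of Theorem \ref{thm1} then brackets $|g'(\zeta)|$ for $|\zeta|=1$, and the formula of Theorem \ref{thm1} yields
\[ \nu_\mathfrak{g}\;\leq\;\frac{\log 1/r_0}{\log 1/L}\;=\;\frac{\log 1/r_0}{\log 1/r_0 - \log (L/r_0)}. \]

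The heart of the argument is then to show ${\rm mod}(\IA(L,r_0))=\log(L/r_0)\leq\beta_0$ by a universal bound independent of the germ. The point to exploit is that both boundary components of the ring $g^{-1}(\IA(L,r_0))$ meet the circle $\{|z|=\ell\}$: the inner component is the inner boundary of $V$, which touches this circle because $\ell$ was chosen extremally; the outer component $g^{-1}(\{|w|=L\})$ touches it at any $z_0$ with $|z_0|=\ell$ and $|g(z_0)|=L$. After dilation by $1/\ell$ this becomes a ring in $\ID$ with both boundary components reaching the unit circle, and the geometric configuration is the one governed by a Teichm\"uller ring with parameter $\sqrt{2}$ via \cite[Lemma 1.3]{Martin89}, capping its modulus by $m_T(\sqrt{2})=\beta_0$.

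Combining the two displays with ${\rm mod}(U)=\log 1/r_0$ delivers the inequality \eqref{r0est}. The step I expect to require the most care is verifying that the two contact points on $\{|z|=\ell\}$ really do force the Teichm\"uller parameter to be $\sqrt{2}$ rather than something larger; this is the essential geometric ingredient and it is what fixes the numerical constant $\beta_0$. Everything else reduces to the Möbius invariance of roundness, the monotonicity of the hyperbolic metric under inclusion, and the monotonicity of the conformal modulus — all of which are already in place in the preceding paragraphs.
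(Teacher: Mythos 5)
Your proposal reproduces the paper's own argument essentially step for step: Möbius normalization by the smallest hyperbolic disk, the hyperbolic-density comparison yielding $\nu_\mathfrak{g}\le \log(1/r_0)/\log(1/L)$ with $L=\sup_{|z|=\ell}|g(z)|$, and then the uniform bound $\log(L/r_0)\le m_T(\sqrt{2})=\beta_0$ coming from the fact that both complementary components of $g^{-1}(\IA(r_0,L))$ meet the circle $\{|z|=\ell\}$, combined with \cite[Lemma 1.3]{Martin89} and the extremality of the Teichm\"uller ring. The only correction is that your sandwich inclusion should be stated as $g^{-1}(\IA(L,1/L))\subset \IA(\ell,1/\ell)\subset g^{-1}(\IA(r_0,1/r_0))$ (the annulus $\IA(L,1/L)$ lies in the target, not the domain), after which the two density comparisons give exactly the ratio bound you claim.
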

\begin{corollary} \label{corlemma4}
Let $\varphi:\IA(1,r_0)\to \ID$ with $\varphi(\IS)=\IS$ be conformal.  Then
\[ \nu_\varphi \leq  \frac{\log 1/r_0}{\log1/r_0 - \beta_0},  \hskip15pt \beta_0 =  2.4984\ldots.\]
\end{corollary}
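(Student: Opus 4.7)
The plan is to derive this corollary directly from Lemma \ref{lemma4} by passing to the inverse mapping. Given the conformal map $\varphi:\IA(r_0,1)\to\ID$ with $\varphi(\IS)=\IS$, set
\[ U = \varphi(\IA(r_0,1)) \subset \ID. \]
Then $U$ is a doubly connected subdomain of $\ID$ with $\IS\subset\partial U$, so the pair $\mathfrak{g}=(U,0)$ is a legitimate germ in the sense of the preceding sections. The inverse $\varphi^{-1}:U\to\IA(r_0,1)$ is conformal with $\varphi^{-1}(\IS)=\IS$, so $\varphi^{-1}$ is precisely the solution $g$ to the trivial Beltrami equation that enters the definition of $\nu_\mathfrak{g}$. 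Consequently $\nu_\varphi$ (the roundness of the germ associated to $\varphi$) coincides with $\nu_\mathfrak{g}$.

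Next I would invoke conformal invariance of the modulus: since $\varphi$ is conformal,
\[ {\rm mod}(U) = {\rm mod}(\IA(r_0,1)) = \log(1/r_0). \]
Lemma \ref{lemma4} now applies directly to the germ $\mathfrak{g}=(U,0)$ and yields
\[ \nu_\varphi \;=\; \nu_\mathfrak{g} \;\leq\; \frac{{\rm mod}(U)}{{\rm mod}(U)-\beta_0} \;=\; \frac{\log 1/r_0}{\log 1/r_0 - \beta_0}, \]
which is exactly the desired inequality.

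There is essentially no obstacle here; the only content is recognizing that the corollary is the ``inverse'' viewpoint of Lemma \ref{lemma4}. The mild bookkeeping point worth noting is that the boundary map $g_0=\varphi^{-1}\big|\IS$ is well-defined only up to a rotation of the target circle, but roundness is rotationally invariant (indeed, M\"obius invariant) so this ambiguity does not affect the estimate. No further geometric input beyond what already went into proving Lemma \ref{lemma4} is required.
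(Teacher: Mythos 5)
Your proposal is correct and is exactly the paper's (implicit) argument: the paper states Corollary \ref{corlemma4} as the ``inverse'' reading of Lemma \ref{lemma4}, and your identification of the germ $\mathfrak{g}=(\varphi(\IA(r_0,1)),0)$, the conformal invariance ${\rm mod}(U)=\log(1/r_0)$, and the equality $\nu_\varphi=\nu_\mathfrak{g}$ is precisely the bookkeeping the authors leave to the reader. Nothing further is needed.
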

The example in Section \ref{G example} below,  see (\ref{sharpexample}) suggests that $\nu_\mathfrak{g} \leq 1+8 r_0^2$ is sharp as $r_0\to 0$.  Equation (\ref{r0est}) gives 
\begin{equation*}
  \nu_\mathfrak{g} \leq \frac{1}{1-\beta_0/\log 1/r_0} \approx  1+ \frac{\beta_0}{\log 1/r_0} \hskip15pt \beta_0 =  2.4984\ldots.
\end{equation*} 
Unfortunately, Corollary \ref{corlemma4} requires $r_0\leq 0.08<e^{-\beta_0}$ to be of any use. We will next show,  as part of the proof of Theorem \ref{thm5},  how to overcome this problem.

 \section{Solutions.}
 
 In this section we develop a couple of applications of our results.   We are aware the constants $2\beta_0$ and  $4\beta_0$ in our next result can be improved.  In fact modifications of the arguments given here will do this,  but they come at a significant cost in terms of complexity, which seems pointless without obtaining a much sharper result.  We discuss sharpness in Section \ref{G example} below.
 \subsection{Problem 2.}
 \begin{theorem}\label{thm2}  Let $\mathfrak{g}=(U,\mu)$ be the germ of the quasisymmetric mapping $g_0$. Then $g_0$ admits a $K_G$--quasiconformal extension $G:\ID\to\ID$ and 
 \begin{equation}\label{result} K_G \leq    K_\mathfrak{g} \times \left\{ \begin{array}{cc}  4\beta_0/m_\mathfrak{g},   & m_\mathfrak{g}\leq  2\beta_0, \\ 1+ 2\beta_0/m_\mathfrak{g},    & m_\mathfrak{g} \geq  2\beta_0, \end{array} \right. 
 \end{equation}
 where $\beta_0=2.4984 \ldots$.
 
 The number $4\beta_0$ in (\ref{result}) cannot be replaced by any constant smaller than $1$.
 \end{theorem}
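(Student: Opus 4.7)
The plan is to apply Corollary~\ref{cor3} (which gives $K_G \le K_\mathfrak{g}\,\nu_\mathfrak{g}$) and then establish the refined roundness estimate
\[ \nu_\mathfrak{g} \le \frac{m_\mathfrak{g}}{m_\mathfrak{g}-\beta_0}, \]
expressed in terms of $m_\mathfrak{g}$ alone. Granted this, the two cases of the theorem are elementary calculus: for $m_\mathfrak{g}\ge 2\beta_0$ one has $\beta_0/m_\mathfrak{g}\le 1/2$, so $1/(1-\beta_0/m_\mathfrak{g})\le 1+2\beta_0/m_\mathfrak{g}$ and therefore $K_G\le K_\mathfrak{g}(1+2\beta_0/m_\mathfrak{g})$; for $m_\mathfrak{g}\le 2\beta_0$ the bound $K_G\le 4\beta_0 K_\mathfrak{g}/m_\mathfrak{g}$ follows from the agreement of the two formulas at the transition value $m_\mathfrak{g}=2\beta_0$ (both give $2K_\mathfrak{g}$) together with the monotonicity of $4\beta_0/m_\mathfrak{g}$ as $m_\mathfrak{g}$ decreases.

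For the refined bound on $\nu_\mathfrak{g}$ I would follow the sub-ring construction used in the proof of Corollary~\ref{cor3}. Extend $g$ by Carath\'eodory reflection to $g\colon V=U\cup\IS\cup U^*\to\IA(r_0,1/r_0)$, and for each $r\in(r_0,1)$ consider the conformal germ $\mathfrak{h}_r=(g^{-1}(\IA(r,1)),0)$. The argument of Corollary~\ref{cor3} already yields $\nu_\mathfrak{g}\le\nu_{\mathfrak h_r}$, and Lemma~\ref{lemma4} applied to $\mathfrak h_r$ controls $\nu_{\mathfrak h_r}$ by the conformal modulus of $g^{-1}(\IA(r,1))$. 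The crucial refinement from $\text{mod}(U)$ to $m_\mathfrak{g}$ is obtained by applying the Teichm\"uller extremal-ring estimate underpinning Lemma~\ref{lemma4} on the image side $\IA(r_0,1/r_0)$ rather than on the preimage ring: on the image the inner boundary $\IS(0,r_0)$ is genuinely round, and the symmetry of the reflected target (whose modulus is $2m_\mathfrak{g}$) is precisely what absorbs the factor of $2$ appearing in front of $\beta_0$.

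Sharpness of the constant $4\beta_0$ (that it cannot be replaced by any constant less than $1$) would be established by invoking the explicit germs constructed in Section~\ref{G example}, in particular the slit example yielding the asymptotic $\nu_\mathfrak{g}\le 1+8r_0^2$ near (\ref{sharpexample}); these show that the true rate as $m_\mathfrak{g}\to 0$ has constant bounded below by $1$. The principal obstacle in the plan is the step that upgrades the naive quasi-invariance bound $\text{mod}(U)\ge m_\mathfrak{g}/K_\mathfrak{g}$, which applied blindly would only yield the inferior estimate $K_\mathfrak{g}\cdot m_\mathfrak{g}/(m_\mathfrak{g}-K_\mathfrak{g}\beta_0)$, to the clean $K_\mathfrak{g}$-free factor $m_\mathfrak{g}/(m_\mathfrak{g}-\beta_0)$: this requires performing the extremal-ring analysis on the image annulus and exploiting its symmetric reflected structure, and, as the authors remark, produces constants $2\beta_0$ and $4\beta_0$ that are clean but certainly not sharp.
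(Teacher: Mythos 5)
There is a genuine gap, and it is exactly at the point where the paper's key idea lives. Your proposal never proves anything in the regime $m_\mathfrak{g}\le 2\beta_0$: the argument ``the two formulas agree at $m_\mathfrak{g}=2\beta_0$ and $4\beta_0/m_\mathfrak{g}$ increases as $m_\mathfrak{g}$ decreases'' is a non sequitur, since a bound valid for germs with modulus $2\beta_0$ says nothing about a different germ with smaller modulus, and your own intermediate estimate $\nu_\mathfrak{g}\le m_\mathfrak{g}/(m_\mathfrak{g}-\beta_0)$ is vacuous there (negative or infinite for $m_\mathfrak{g}\le\beta_0$, and worse than $4\beta_0/m_\mathfrak{g}$ for $m_\mathfrak{g}$ just above $\beta_0$). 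The paper's proof handles precisely this case by composing with the radial stretch $f_\alpha(z)=z|z|^{\alpha-1}$, $\alpha\ge1$: since $f_\alpha|\IS$ is the identity, $(U,\mu_{f_\alpha\circ g})$ is another germ of $g_0$; writing $f_\alpha\circ g=\varphi\circ h$ with $h:\ID\to\ID$ quasiconformal (integrating the Beltrami coefficient extended by $0$) and $\varphi:h(U)\to\IA(r_0^\alpha,1)$ conformal, one gets a conformal germ whose modulus is exactly $\alpha m_\mathfrak{g}$ by conformal invariance; Lemma~\ref{lemma4} and Corollary~\ref{cor1} then give $K_G\le \alpha K_\mathfrak{g}\,\alpha m_\mathfrak{g}/(\alpha m_\mathfrak{g}-\beta_0)$, and optimizing ($\alpha=2\beta_0/m_\mathfrak{g}$ when $m_\mathfrak{g}\le2\beta_0$, $\alpha=1$ otherwise) produces both lines of (\ref{result}). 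This stretching-and-factorization device is entirely absent from your plan.

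The second problem is that your ``refined roundness estimate'' $\nu_\mathfrak{g}\le m_\mathfrak{g}/(m_\mathfrak{g}-\beta_0)$ for a general germ $(U,\mu)$, in terms of $m_\mathfrak{g}$ alone, is asserted rather than proved. Lemma~\ref{lemma4} is a statement about conformal germs, and its proof uses conformality of $g$ near $\IS$ (the hyperbolic density comparison); for quasiconformal $g$ the curves $g^{-1}(\IS(0,r))$ are not controlled this way. Your proposed repair -- performing the Teichm\"uller extremal-ring estimate ``on the image side'' and letting the symmetry of the reflected annulus of modulus $2m_\mathfrak{g}$ ``absorb the factor of~2'' -- is not an argument, and it is not how the paper removes the factor $K_\mathfrak{g}$ from the modulus: in the paper that removal comes from the factorization $f_\alpha\circ g=\varphi\circ h$ above, where the relevant conformal germ lives on $h(U)$ and its modulus equals $\alpha m_\mathfrak{g}$ exactly. (You also quote Corollary~\ref{cor3} backwards: its proof gives $\nu_{\mathfrak h}\le\nu(g^{-1}(\IS(0,r)))\le\nu_\mathfrak{g}+\epsilon$, not $\nu_\mathfrak{g}\le\nu_{\mathfrak h_r}$.) Finally, for sharpness you cite the wrong asymptotics: the statement that $4\beta_0$ cannot be replaced by any constant below $1$ comes from the slit germ $(\ID\setminus[-a,a],0)$ via the Reich--Strebel extremality of the linear map, i.e.\ the lower bound $K\ge(1+e^{-2m_\mathfrak{g}})/(1-e^{-2m_\mathfrak{g}})\ge1/m_\mathfrak{g}$ at (\ref{sharpXX}) as $m_\mathfrak{g}\to0$, whereas the estimate $\nu_\mathfrak{g}=\frac{1+a^2}{1-a^2}$ near (\ref{sharpexample}) concerns the opposite regime $m_\mathfrak{g}\to\infty$.
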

 \noindent{\bf Proof.} Let $\Omega = U \cup \IS \cup U^*$ and $g:\Omega \to \IA(r_0,1/r_0)$ be quasiconformal solving (\ref{be}) on $U$ and $g(1/\zbar)=g(z)$.   Let $f_\alpha(z) = z|z|^{\alpha-1}$, $\alpha\geq 1$,  and $\mu_\alpha = \mu_{f_\alpha\circ g }$.  Then,  as $f_\alpha|\IS$ is the identity,  we see that $\mathfrak{g}_\alpha = (U,\mu_\alpha)$ is another germ for $g_0$.  Let $\nu(z) = \mu_\alpha(z),  \;\; z\in U$ and $0$ otherwise.  Integrate this Beltrami coefficient to a quasiconformal mapping $h:\ID\to\ID$,  $\mu_h=\nu$.  Now as both $h$ and $f_\alpha\circ g$ solve the same Beltrami equation on $U$ there is a conformal mapping $\varphi:h(U)\to (f_\alpha\circ g)(U) = \IA(r_{0}^{\alpha},1)$ so that $(f_\alpha\circ g)(z)=\varphi\circ h(z)$.  Since $\varphi(\IS)=\IS$ we see $(h(U),0)$ is the germ of $\varphi|\IS$ and ${\rm mod}(h(U))=\log 1/r_{0}^{\alpha}$.  Then,  combining Lemma \ref{lemma4} and Corollary \ref{cor1} we see that $\varphi|\IS$ has an extension to a quasiconformal $\Phi:\ID\to\ID$ with distortion at most
 \[ K_\Phi \leq  \frac{\log 1/r_{0}^{\alpha}}{\log 1/r_{0}^{\alpha} - \beta_0} \]
 provided the right-hand side is positive.  Then  $G=\Phi\circ h$ is the extension we seek,  $G |\IS = g_0$ and as $K_{f_\alpha}=\alpha \geq 1$
 \[ K_G \leq \alpha \, K_\mathfrak{g}  \frac{\log 1/r_{0}^{\alpha}}{\log 1/r_{0}^{\alpha} - \beta_0} = K_\mathfrak{g}  \frac{\alpha^2 \, \log 1/r_{0}}{\alpha \, \log 1/r_{0}- \beta_0}. \] 
 We need to make a good choice of $\alpha$ here.  If $2\beta_0\geq \log 1/r_0$,  then the minimum occurs,  otherwise we put $\alpha=1$ and obtain 
 \[ K_G \leq   K_\mathfrak{g} \times \left\{ \begin{array}{cc} \frac{4\beta_0} { \log 1/r_{0}}, & r_0\geq e^{-2\beta_0}, \\ \frac{1}{1- \beta_0/\log(1/r_0)}, & r_0\leq e^{-2\beta_0} \end{array} \right. \] 
 and a little analysis gives the result at (\ref{result}) once we note that,  by the definition at (\ref{2}), $m_\mathfrak{g}=\log1/r_0$. 
 
 \medskip
 
 The example given by consideration of the germ $(\ID\setminus[-a,a],0)$ at (\ref{sharpXX}) establishes the claim that $4\beta_0$ cannot be replaced by any  number less than 1.  \hfill $\Box$
 
 \bigskip
 
We may interpret this result as follows by considering the inverse mappings.

\begin{theorem}  \label{thm5} Let $f:\ID\to\ID$ be a homeomorphism of finite distortion with $|f|\to 1$ as $|z|\to1$ and define   
  \begin{equation*}
 K(r) = \| K(z,f)\|_{L^\infty(\ID \setminus \ID(0,r))}.
 \end{equation*} Then the boundary values $f_0=f|\IS:\IS\to\IS$ exist and have a $K$--quasiconformal extension with
 \begin{equation*}
 K \leq \left(   1+ \frac{4\beta_0}{\log \frac{1}{r_0}} \right) \;  K(r), \hskip30pt \beta_0=2.4984 \ldots,
 \end{equation*}
 and 
 \[ \log 1/r_0 = {\rm mod}(f(\IA(r,1)) \geq \frac{ \log 1/r }{K(r) }. \]
 \end{theorem}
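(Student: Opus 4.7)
The plan is to deduce Theorem \ref{thm5} from Theorem \ref{thm2} by running that result on the natural germ associated to the annular behaviour of $f$ near $\IS$. First, since $f:\ID\to\ID$ is a homeomorphism with $|f(z)|\to 1$ as $|z|\to 1$, Carath\'eodory's theorem provides a homeomorphic extension of $f$ to $\overline{\ID}$, so $f_0 = f|_\IS$ exists as a homeomorphism of $\IS$. The modulus inequality
\begin{equation*}
\log(1/r_0) = {\rm mod}(f(\IA(r,1))) \geq \frac{\log(1/r)}{K(r)}
\end{equation*}
is immediate from the fact that a $K$-quasiconformal map distorts modulus by a factor at most $K$, applied to $f|_{\IA(r,1)}$.

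For the main estimate I set $U = \IA(r,1)$ and consider the germ $\mathfrak{g} = (U,\mu_f|_U)$. A Beltrami solution on $U$ is $f|_U$ itself; composing with the Riemann map $\Psi:f(U)\to \IA(r_0,1)$ (which sends $\IS$ to $\IS$) yields the normalised solution $\Psi\circ f|_U:U\to\IA(r_0,1)$, so $K_\mathfrak{g}=K(r)$ and $m_\mathfrak{g}=\log 1/r_0$. Theorem \ref{thm2} then produces a quasiconformal extension $G:\ID\to\ID$ of the germ's boundary map $g_0 = \Psi|_\IS\circ f_0$, with
\begin{equation*}
K_G \leq K(r)\max\Bigl\{\tfrac{4\beta_0}{\log 1/r_0},\;1+\tfrac{2\beta_0}{\log 1/r_0}\Bigr\} \leq K(r)\Bigl(1+\tfrac{4\beta_0}{\log 1/r_0}\Bigr),
\end{equation*}
the final inequality being the simple unification of the two regimes of \eqref{result} into a single clean formula.

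To convert the extension of $g_0$ into one of $f_0$, I note that $\Psi^{-1}:\IA(r_0,1)\to f(U)\subset\ID$ is a conformal germ with $\Psi^{-1}(\IS)=\IS$, so Corollary \ref{corext1} together with Corollary \ref{corlemma4} extends $\Psi^{-1}|_\IS$ to a quasiconformal self-map of $\ID$ with distortion at most $\log(1/r_0)/(\log(1/r_0)-\beta_0)$. Composing this with $G$ gives a quasiconformal extension of $f_0 = \Psi^{-1}|_\IS\circ g_0$. The main technical obstacle is that the naive product of the two distortion bounds slightly exceeds $K(r)(1+4\beta_0/\log 1/r_0)$ in the small-modulus regime; this is handled by absorbing the $f_\alpha$-trick of Theorem \ref{thm2}'s proof (with $f_\alpha(z) = z|z|^{\alpha-1}$, identity on $\IS$) simultaneously across the two conformal-germ factors, and choosing $\alpha$ to balance them so as to restore the clean constant $4\beta_0$ uniformly in $\log 1/r_0$.
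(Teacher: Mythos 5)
Your reduction via the germ $(\IA(r,1),\mu_f)$ is a reasonable start (indeed $K_\mathfrak{g}=K(r)$ and $m_\mathfrak{g}=\log 1/r_0$, and the modulus inequality is fine), but the last step --- recovering $f_0$ from $g_0=\Psi|\IS\circ f_0$ by multiplying the Theorem \ref{thm2} bound with a separate bound for an extension of $\Psi^{-1}|\IS$ --- contains a genuine gap, and the excess is not ``slight'' nor confined to small modulus. Writing $m=\log 1/r_0$, the best factor-wise bounds available from the paper are $K(r)\max\{4\beta_0/m,\,m/(m-\beta_0)\}$ for the $g_0$-factor and $\max\{4\beta_0/m,\,m/(m-\beta_0)\}$ for the conformal factor; their product exceeds the target $K(r)(1+4\beta_0/m)$ for all $m$ below roughly $2.8\beta_0$ (and even the large-$m$ cross term only disappears for $m$ of size about $3\beta_0$--$4\beta_0$), while for small $m$ the product is of order $\beta_0^2K(r)/m^2$ against a claimed bound of order $\beta_0 K(r)/m$ --- off by an unbounded factor. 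Your proposed repair, ``absorbing the $f_\alpha$-trick simultaneously across the two conformal-germ factors and choosing $\alpha$ to balance them,'' is not worked out and cannot bridge a gap of this size: any argument that estimates the two factors separately and multiplies is stuck at the product of two straightening penalties, each of which is genuinely of order $1/m$ in the worst case (compare the sharpness example at (\ref{sharpXX})). The missing idea is structural, not an optimization of $\alpha$: the quasiconformal factor should cost exactly $K(r)$ with no modulus penalty at all. Since $\Psi\circ f:\IA(r,1)\to\IA(r_0,1)$ is a $K(r)$-quasiconformal map between \emph{round} annuli taking $\IS$ to $\IS$, it extends by repeated reflection in the boundary circles (exactly as in the proofs of Corollary \ref{cor3} and Lemma \ref{lemma5}) to a $K(r)$-quasiconformal self-map $H$ of $\ID$ with $H|\IS=\Psi\circ f_0$. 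One then pays the straightening penalty only once, for the conformal germ $(f(\IA(r,1)),0)$: Theorem \ref{mainthm2} (equivalently Corollaries \ref{corext1} and \ref{corlemma4} together with the $f_\alpha$-trick) extends $\Psi^{-1}|\IS$ with distortion at most $\max\{4\beta_0/m,\,1+2\beta_0/m\}\leq 1+4\beta_0/m$, and $F=\widetilde{\Psi^{-1}}\circ H$ yields the stated bound in all regimes. This ``single penalty'' reading is what the paper means by obtaining Theorem \ref{thm5} from Theorem \ref{thm2} ``by considering the inverse mappings.''

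A smaller but real point: the existence of $f_0$ does not follow from Carath\'eodory's theorem for a mere homeomorphism --- a homeomorphism of $\ID$ with $|f|\to1$ (an infinite twist, for instance) need not extend continuously to any point of $\IS$. The boundary values exist because $f$ is quasiconformal on $\IA(r,1)$ (assuming, as the theorem implicitly does, that $K(r)<\infty$): the reflected map $H$ above, or the quasiconformal map $\Psi\circ f$ between Jordan ring domains, extends homeomorphically to the closed annulus, and this is what produces $f_0$.
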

 In fact,  contained in the proof of Theorem \ref{thm2} is the estimate on roundness of a conformal germ $\mathfrak{g}$,
 \begin{equation*}
   \nu_{\mathfrak{g}} \leq 1+ \frac{4\beta_0}{\log \frac{1}{r_0}},  \hskip10pt \mbox{as $r_0\to 1$}.
 \end{equation*}
 Then the example at (\ref{sharpexample}) shows that this is the correct form  and that $4\beta_0$ cannot be replaced by any constant less than $1$.  
 
 \medskip
 
 We are now in a position to consider the asymptotically conformal extension at Lemma \ref{lemma3}.  We recall that if $f'$ is Lipschitz and if $G(re^{i\theta}) = r^{f'(\theta)} e^{if(\theta)}$, then
\begin{equation*}
\big|\,\mu_G(re^{i\theta})\,\big| = \left( 1+\left( \frac{2f'(\theta)}{f''(\theta)\log r}\right)^2\right)^{-1/2}. \end{equation*}
  We therefore have 
  \[ K(r) = \frac{\sqrt{(f''/2f')^2 \log^2 r + 1}+|f''/2f'| \; \log 1/r}{\sqrt{(f''/2f')^2 \log^2 r + 1}-|f''/2f'| \; \log 1/r} \leq \frac{\sqrt{a_{f}^{2} \log^2 r + 1}+a_{f}\, \log 1/r}{\sqrt{a_{f}^{2} \log^2 r + 1}-a_{f} \, \log 1/r} \]
with $a_f=\sup_\theta |f''/2f'|$. From the formula for $G$ we see that in Theorem \ref{thm5} $\log 1/r_0 \geq (\min_\theta f' ) \, \log 1/r$,  so there is an extension with $b_f =1/ \min_\theta f' $, 
\[ K\leq  \left(   1+ \frac{4\beta_0 \, b_f }{\log \frac{1}{r}} \right) \;  K(r). \] 
  One can explicitly solve the associated minimisation problem for $\log 1/r$,  but again it is very complicated.  We can estimate
    \[ K(r) \leq (1+2 a_{f} \, \log 1/r)^2.  \]
  Then some asymptotic analysis gives the following theorem.
  \begin{theorem}\label{Lf}  Let $g= e^{i f(\theta)} :\IS\to\IS$ have continuous second derivatives.  Let
  \begin{eqnarray*}
A_f =  2\beta_0\;  \frac{\max_{\theta} \; f''(\theta)/f'(\theta)}{\min_{\theta} \;f'(\theta)}, \quad \beta_0=2.4984 \ldots.  
  \end{eqnarray*}
  Then $g$ has a quasiconformal extension $G:\ID\to\ID$ with 
  \[ K_G \leq 1+ 4\sqrt{A_f}+9A_f. \]
  \end{theorem}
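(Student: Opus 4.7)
The plan is to combine the asymptotically conformal extension from Lemma \ref{lemma3} with the a posteriori correction supplied by Theorem \ref{thm5}. The map $G(re^{i\theta}) = r^{f'(\theta)} e^{if(\theta)}$ is a homeomorphism $\ID\to\ID$ with boundary values $g$, whose distortion decays to $0$ at $\IS$ but blows up near the origin; the two ingredients trade off these two effects, and a one-parameter optimisation over the cut-off radius produces the stated bound.

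First, from the explicit formula for $|\mu_G|$ recorded just before Lemma \ref{lemma3}, together with the estimate $|f''(\theta)/(2f'(\theta))|\leq a_f$ where $a_f=\tfrac{1}{2}\max_\theta f''(\theta)/f'(\theta)$, the discussion immediately preceding the theorem already gives
\[
K(r) \;:=\; \|K(z,G)\|_{L^\infty(\ID\setminus\ID(0,r))} \;\leq\; (1+2a_f\log(1/r))^2.
\]
Next, because $|G(\rho e^{i\theta})|=\rho^{f'(\theta)}\leq \rho^{\min_\theta f'}$ for $\rho<1$, the inner complementary component of $G(\IA(r,1))$ sits inside the closed disk of radius $r^{\min_\theta f'}$, and monotonicity of modulus gives the improved estimate
\[
\log(1/r_0) \;=\; \mathrm{mod}\bigl(G(\IA(r,1))\bigr) \;\geq\; (\min_\theta f')\log(1/r) \;=\; \log(1/r)/b_f,
\]
with $b_f=1/\min_\theta f'$. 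This is sharper than the generic bound $\log(1/r)/K(r)$ built into Theorem~\ref{thm5}. Feeding $G$ into Theorem~\ref{thm5} then produces a genuinely quasiconformal extension $\widetilde{G}\colon\ID\to\ID$ of the boundary values $g$, satisfying
\[
K_{\widetilde{G}} \;\leq\; \left(1+\frac{4\beta_0 b_f}{\log(1/r)}\right)(1+2a_f\log(1/r))^2.
\]

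Finally, the parameter $r$ is free, and the identity $A_f = 4\beta_0 a_f b_f$, immediate from the definitions, suggests the substitution $u=2a_f\log(1/r)$, which reduces the problem to minimising
\[
F(u) \;:=\; \left(1+\frac{2A_f}{u}\right)(1+u)^2, \qquad u>0.
\]
The main obstacle is producing a clean closed-form majorant for $\inf_u F(u)$; the natural stationary point is algebraic in $A_f$ and gives an untidy $A_f^{3/2}$ term, so I would split into two regimes. For $A_f\leq 4$ I choose $u=\sqrt{A_f}$ and expand directly to obtain
\[
F(\sqrt{A_f}) \;=\; 1+4\sqrt{A_f}+5A_f+2A_f^{3/2} \;\leq\; 1+4\sqrt{A_f}+9A_f,
\]
using $2A_f^{3/2}\leq 4A_f$ in this range. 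For $A_f\geq 4$ I instead set $u=2$, giving $F(2)=9+9A_f$, which is bounded by $1+4\sqrt{A_f}+9A_f$ exactly when $\sqrt{A_f}\geq 2$. Together the two regimes cover all $A_f>0$ and produce the claimed bound $K_G\leq 1+4\sqrt{A_f}+9A_f$.
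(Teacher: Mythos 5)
Your proposal is correct and follows essentially the same route as the paper: the radial-power extension $G(re^{i\theta})=r^{f'(\theta)}e^{if(\theta)}$ of Lemma \ref{lemma3}, the estimate $K(r)\leq(1+2a_f\log 1/r)^2$, the improved modulus bound $\log 1/r_0\geq(\min_\theta f')\log 1/r$ fed into Theorem \ref{thm5}, and an optimisation over the cut-off radius via $A_f=4\beta_0 a_f b_f$. The paper leaves that last step as ``some asymptotic analysis''; your two-regime choice $u=\sqrt{A_f}$ for $A_f\le 4$ and $u=2$ for $A_f\ge 4$ correctly supplies it and verifies the constants $4$ and $9$.
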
 
  It remains an interesting problem to formulate a sharper and M\"obius invariant version of this theorem.

\subsection{Problem 1}   We can now apply the solution of Problem 2 to solve Problem 1 when we note in this case that $m_{\mathfrak{g}}={\rm mod}(U)$.
 
  \begin{theorem}\label{mainthm2}  Let $\mathfrak{g}=(U,0)$ be the germ of the quasisymmetric mapping $g_0$. Then $g_0$ admits a $K_G$--quasiconformal extension $G:\ID\to\ID$ and 
 \begin{equation}\label{result2X} 
   K_G \leq    \left\{ \begin{array}{cc}  4\beta_0/{\rm mod}(U),   &  \mbox{ if $\;\;2\beta_0 \leq {\rm mod}(U)$},  \\ 1+ 2\beta_0/{\rm mod}(U),    &\mbox{ if $\;\; 2\beta_0 \geq {\rm mod}(U)$}, \end{array} \right. 
 \end{equation}
 where $\beta_0=2.4984 \ldots$.
 
  The number $4\beta_0$ in (\ref{result2X}) cannot be replaced by any constant smaller than $1$.
 \end{theorem}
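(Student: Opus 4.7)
The plan is to derive Theorem \ref{mainthm2} as an immediate specialization of Theorem \ref{thm2} to the conformal case $\mu\equiv 0$. Two identifications drive the reduction. First, since $k_\mathfrak{g} = \|\mu\|_\infty = 0$, we have $K_\mathfrak{g} = (1+k_\mathfrak{g})/(1-k_\mathfrak{g}) = 1$, so the multiplicative factor in the bound (\ref{result}) disappears. Second, the solution $g:U\to \IA(r_0,1)$ of the Beltrami equation (\ref{be}) is honestly conformal when $\mu\equiv 0$, and conformal maps preserve the conformal modulus, so $m_\mathfrak{g} = {\rm mod}(g(U)) = {\rm mod}(U)$. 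Substituting both identities into (\ref{result}) gives precisely (\ref{result2X}), with the piecewise split now occurring at ${\rm mod}(U)=2\beta_0$ instead of at $m_\mathfrak{g}=2\beta_0$.

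No additional quasiconformal construction is required for the upper bound: the substantive work -- the radial squeeze $f_\alpha(z)=z|z|^{\alpha-1}$ used to tilt the modulus, the optimization in $\alpha$, the reduction to the roundness estimate in Lemma \ref{lemma4}, and the extension mechanism from Corollary \ref{cor1} -- was already carried out inside the proof of Theorem \ref{thm2}. At this stage of the paper the theorem is really a structural corollary recording that the general Problem 2 bound collapses cleanly in the conformal case, and in particular it makes transparent that the roundness of a conformal germ is controlled solely by ${\rm mod}(U)$.

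For the sharpness clause I would invoke the slit-disk example $(\ID\setminus[-a,a],0)$ developed in Section \ref{G example} via equations (\ref{sharpXX}) and (\ref{sharpexample}), as the proof of Theorem \ref{thm2} already does. The task is to verify that, as $a\to 1$ (equivalently, as ${\rm mod}(U)\to 0$), the roundness $\nu_\mathfrak{g}$ -- which is a lower bound for the distortion of any quasiconformal extension of $g_0$ -- grows no slower than $c/{\rm mod}(U)$ for some $c\geq 1$. This precisely rules out replacing $4\beta_0$ in (\ref{result2X}) by any constant strictly smaller than $1$. The main obstacle in the whole proof is not the deduction from Theorem \ref{thm2} but the direct roundness computation for this slit-disk family; fortunately that calculation is performed in Section \ref{G example} and is only invoked here.
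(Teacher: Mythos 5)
Your reduction of the upper bound is exactly the paper's argument: Theorem \ref{mainthm2} is stated there with no separate proof beyond the one-line observation that for a conformal germ $K_\mathfrak{g}=1$ and $m_\mathfrak{g}={\rm mod}(U)$ (conformal maps preserve modulus), so (\ref{result}) collapses to (\ref{result2X}). That part is correct and complete.

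The sharpness clause, however, is justified in your proposal by a premise the paper does not establish and which is in fact false: you assert that the roundness $\nu_\mathfrak{g}$ is a \emph{lower} bound for the distortion of every quasiconformal extension of $g_0$, and propose to verify $\nu_\mathfrak{g}\geq c/{\rm mod}(U)$. In the paper roundness only enters as an \emph{upper}-bound mechanism (Corollary \ref{cor1}: some extension has $K_G\leq\nu_\mathfrak{g}$); nothing says all extensions have distortion at least $\nu_\mathfrak{g}$, and the slit-disk family itself contradicts that claim in the opposite regime, since for small $a$ one has $\nu_\mathfrak{g}=\frac{1+a^2}{1-a^2}\approx 1+2a^2$ while the extremal extension has distortion $\frac{1+r_0^2}{1-r_0^2}\approx 1+a^2/2$ with $r_0\approx a/2$. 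The correct mechanism, which is what Section \ref{G example} actually supplies and what (\ref{sharpXX}) records, is the Reich--Strebel/Strebel extremality of the affine stretch $g_3^{-1}(w)=\frac12\left(\frac{1}{r_0}w+r_0\bar w\right)$: since $g_1,g_2$ are conformal, \emph{every} quasiconformal extension of $g_0$ for the germ $(\ID\setminus[-a,a],0)$ satisfies
\begin{equation*}
K\;\geq\;\frac{1+r_0^2}{1-r_0^2}\;=\;\frac{1+e^{-2\,{\rm mod}(U)}}{1-e^{-2\,{\rm mod}(U)}}\;\geq\;\frac{1}{{\rm mod}(U)}\qquad\mbox{as ${\rm mod}(U)\to 0$,}
\end{equation*}
which is what rules out replacing $4\beta_0$ by any constant below $1$. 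So replace the appeal to roundness as a lower bound by the appeal to this extremality statement; the rest of your argument then matches the paper.
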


\section{Separation in modulus} 
    We can now use this result to piece together a more general result.  Let $\Omega\subset \IC$ be a domain.  We say that a set $E\subset \Omega$ can be $Q$-separated in modulus if there is  a countable collection of disjoint annular regions $A_i \subset \Omega$ with $\Mod(A_i)\geq Q$
    such that  
    \[ E \subset \bigcup_i D_i \]
    and $D_i\subset \Omega $ is the bounded component of $\IC\setminus A_i$.
    
    This condition seems not too far from the definition of uniformly perfectness of a set when described in terms of modulus,  see for  instance \cite{JV}, but it is different (as is easily seen in the way one might agglomerate components).  Further,  it is easy to construct such sets $E\subset \ID$ which are  $Q$-separated in modulus with $\overline{E}=\IS$.  Indeed one of our subsequent applications to Riemann surfaces will exhibit this property.
    
    We begin with two lemmas.
    
    \begin{lemma} \label{lemma5}Let  $E,E'\subset \ID$ be compact and connected, $\beta_0=2.4984 \ldots$ and $f:\ID\setminus E\to\ID\setminus E'$ be a quasiconformal homeomorphism with $|\mu_f| \leq k<1$ and $f(\IS)=\IS$.  Then $f|\IS$ admits a $K_F$--quasiconformal extension $F:\ID\to\ID$ with
   \[
    K_F \leq K \left( 1+ \frac{4\beta_0}{Q} \right) \left( 1+\frac{4\beta_0}{Q'} \right) ,  \hskip15pt  K=\frac{1+k}{1-k},\;\;  Q={\rm mod}(\ID\setminus E),\;\; Q'={\rm mod}(\ID\setminus E').
\]
\end{lemma}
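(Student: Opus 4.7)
The plan is to factor out the conformal shape of the two inner boundary components by rectifying both rings $\ID\setminus E$ and $\ID\setminus E'$ to standard round annuli. This localises all shape-dependent distortion into two conformal-germ extensions (each contributing one of the factors $1+4\beta_0/Q$ or $1+4\beta_0/Q'$ via Theorem \ref{mainthm2}), while the only genuinely quasiconformal content of $f$ becomes a $K$-qc map between standard annuli, which I extend to $\ID$ with unchanged distortion by iterated Schwarz reflection.

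First I would pick conformal uniformisers $\varphi_1\colon\IA(r_1,1)\to\ID\setminus E$ and $\varphi_2\colon\IA(r_2,1)\to\ID\setminus E'$ with $\varphi_i(\IS)=\IS$, where $r_1=e^{-Q}$ and $r_2=e^{-Q'}$. The pairs $(\ID\setminus E,0)$ and $(\ID\setminus E',0)$ are conformal germs whose boundary quasisymmetric maps are $\varphi_1^{-1}|\IS$ and $\varphi_2^{-1}|\IS$. Theorem \ref{mainthm2} provides quasiconformal extensions of each to $\ID$; the single bound $1+4\beta_0/m$ uniformly dominates both branches of the piecewise estimate there, since $4\beta_0/m\le 1+4\beta_0/m$ and $1+2\beta_0/m\le 1+4\beta_0/m$. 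Inverting these extensions yields quasiconformal maps $\Theta_1,\Theta_2\colon\ID\to\ID$ with $\Theta_i|\IS=\varphi_i|\IS$ and distortions at most $1+4\beta_0/Q$ and $1+4\beta_0/Q'$ respectively.

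Next consider the middle piece $h=\varphi_2^{-1}\circ f\circ\varphi_1\colon\IA(r_1,1)\to\IA(r_2,1)$, a $K$-quasiconformal homeomorphism of standard annuli. The main obstacle is extending $h$ to a $K$-quasiconformal self-map $H\colon\ID\to\ID$ \emph{without inflating the distortion}. I plan to do this by iterated Schwarz reflection across the inner boundary circles: $h$ extends homeomorphically to the closure by the boundary regularity of qc maps between ring domains and sends $\IS(0,r_1)$ to $\IS(0,r_2)$; reflecting $h$ by $z\mapsto r_1^2/\bar z$ in the domain and $w\mapsto r_2^2/\bar w$ in the target gives a $K$-quasiconformal extension $\IA(r_1^2,1)\to\IA(r_2^2,1)$ (anti-conformal reflections preserve pointwise distortion, and the two pieces match on $\IS(0,r_1)$ since the reflections fix those circles). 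Iterating produces $K$-quasiconformal maps $\IA(r_1^{2^n},1)\to\IA(r_2^{2^n},1)$; since $r_i^{2^n}\to 0$ the iterates cohere on compacta of $\ID\setminus\{0\}$, and setting $H(0)=0$ yields the desired $K$-quasiconformal self-map of $\ID$ with $H|\IS=h|\IS$.

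Finally set $F=\Theta_2\circ H\circ\Theta_1^{-1}\colon\ID\to\ID$. Restricted to $\IS$, the composition telescopes to
$$ F|\IS \;=\; \varphi_2\circ(\varphi_2^{-1}\circ f\circ\varphi_1)\circ\varphi_1^{-1}|\IS \;=\; f|\IS, $$
and submultiplicativity of maximal distortion under composition gives
$$ K_F \;\le\; K_{\Theta_2}\,K_H\,K_{\Theta_1^{-1}} \;\le\; K\Bigl(1+\frac{4\beta_0}{Q}\Bigr)\Bigl(1+\frac{4\beta_0}{Q'}\Bigr), $$
as required.
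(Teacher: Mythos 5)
Your proposal is correct, and it reaches the stated bound by a decomposition that differs from the paper's. The paper uniformises only the target ring: it extends the boundary values of the conformal germ $(\ID\setminus E',0)$ via the solution to Problem 1 to obtain the factor $1+4\beta_0/Q'$, and then treats $\varphi\circ f$ as the quasiconformal germ $(\ID\setminus E,\mu_f)$, applying Theorem \ref{thm2} in one shot to obtain the remaining factor times $K$; the desired map is the composition of these two extensions. You instead uniformise both rings, invoke the conformal-germ extension Theorem \ref{mainthm2} twice (your observation that both branches of that estimate are dominated by $1+4\beta_0/m$ is right), and dispose of the leftover $K$--quasiconformal map between round annuli by iterated reflection across the inner circles; since the circles and the puncture are removable, the limit map is $K$--quasiconformal as you claim, and this reflection device is exactly the one the paper already employs in the proof of Corollary \ref{cor3}, so it is well within the toolkit. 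What your symmetric three-factor route buys is that each modulus factor appears directly in terms of the modulus of its own ring, $Q$ on the source side and $Q'$ on the target side, whereas the paper's single application of Theorem \ref{thm2} naturally produces a factor in terms of $m_\mathfrak{g}$, the modulus of the image annulus, so one must note that $m_\mathfrak{g}={\rm mod}(\ID\setminus E')=Q'$ (or use $m_\mathfrak{g}\geq Q/K$) to land on the stated inequality; the paper's route is shorter because Theorem \ref{thm2} packages the quasiconformal step and one conformal-germ step together. In both arguments the boundary values telescope to $f|\IS$ and submultiplicativity of maximal distortion yields $K_F\leq K\left(1+\frac{4\beta_0}{Q}\right)\left(1+\frac{4\beta_0}{Q'}\right)$.
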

\begin{proof}  The boundary values of the conformal mapping $\varphi:\ID\setminus F\to\IA(e^{-Q'},1)$ have a $(1+4\beta_0/Q')$ extension $\Phi$ to the disk.  Then $\varphi\circ f:\ID\setminus E \to \IA(e^{-Q},1)$ admits a $(1+4\beta_0/Q)K$ extension $\widetilde{\varphi\circ f}$ from Theorem \ref{thm2}.  Then $F=\Phi^{-1} \circ \widetilde{\varphi\circ f}:\ID\to\ID$ suffices.
\end{proof}

\begin{lemma} \label{lemma6} Let  $A,B\subset \IC$ be doubly connected domains with Jordan outer boundary components $\partial_+A$ and $\partial_+B$ bounding disks $\Omega_A$ and $\Omega_B$ respectively and $\beta_0=2.4984 \ldots$. Let $f:A \to B$ be a quasiconformal homeomorphism with $|\mu_f| \leq k<1$.   Then $f$ extends to a map $f_0:\partial \Omega_A\to \partial \Omega_B$ which admits a $K_F$--quasiconformal extension $F:\overline{\Omega_A}\to\overline{\Omega_B}$ with
   \[
    K_F\leq K \left(1+ \frac{4\beta_0}{Q}\right) \left( 1+\frac{4\beta_0}{Q'}\right),  \hskip15pt  K=\frac{1+k}{1-k},\;\;  Q={\rm mod}(A),\;\; Q'={\rm mod}(B).
\]
    \end{lemma}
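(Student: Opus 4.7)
\textbf{Proof plan for Lemma \ref{lemma6}.} The plan is to reduce the statement directly to Lemma \ref{lemma5} by conformally uniformising the two outer disks $\Omega_A$ and $\Omega_B$ onto $\ID$. Since $\partial \Omega_A$ and $\partial \Omega_B$ are Jordan curves, the Riemann mappings $\psi_A:\Omega_A\to\ID$ and $\psi_B:\Omega_B\to\ID$ extend by Carath\'eodory to homeomorphisms of the closures $\overline{\Omega_A}\to\overline\ID$ and $\overline{\Omega_B}\to\overline\ID$. In particular $\psi_A(\partial\Omega_A)=\IS=\psi_B(\partial\Omega_B)$.

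Next I would set $E_A=\overline{\Omega_A}\setminus A$ and $E_B=\overline{\Omega_B}\setminus B$ (both are compact, connected sets, being complements of ring domains in their outer Jordan disks), and let $E'_A=\psi_A(E_A)$, $E'_B=\psi_B(E_B)$. Then $\psi_A$ maps $A$ conformally onto $\ID\setminus E'_A$ and $\psi_B$ maps $B$ conformally onto $\ID\setminus E'_B$, and by conformal invariance of modulus,
\[
{\rm mod}(\ID\setminus E'_A)={\rm mod}(A)=Q,\qquad {\rm mod}(\ID\setminus E'_B)={\rm mod}(B)=Q'.
\]
After possibly composing $f$ with the topological exchange of boundary components (i.e.\ assuming without loss of generality that $f$ sends the outer boundary of $A$ to the outer boundary of $B$, which is forced as soon as $f$ extends continuously to $\partial\Omega_A$), the conjugate
\[
g\;=\;\psi_B\circ f\circ \psi_A^{-1}\colon \ID\setminus E'_A \longrightarrow \ID\setminus E'_B
\]
is a quasiconformal homeomorphism with $g(\IS)=\IS$, and because $\psi_A,\psi_B$ are conformal its Beltrami coefficient satisfies $|\mu_g|\leq k$ almost everywhere.

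Now I would apply Lemma \ref{lemma5} directly to $g$: its boundary values admit a $K_G$-quasiconformal extension $G:\ID\to\ID$ with
\[
K_G \;\leq\; K\left(1+\frac{4\beta_0}{Q}\right)\left(1+\frac{4\beta_0}{Q'}\right),\qquad K=\frac{1+k}{1-k}.
\]
Pulling back by the conformal uniformisations produces
\[
F \;=\; \psi_B^{-1}\circ G\circ \psi_A\colon \overline{\Omega_A}\longrightarrow \overline{\Omega_B},
\]
which is quasiconformal, has the prescribed boundary values $f_0=\psi_B^{-1}\circ g|\IS\circ \psi_A$, and — because pre- and post-composition with the conformal maps $\psi_A,\psi_B$ do not change the distortion — has $K_F=K_G$, giving the claimed bound.

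The only genuinely non-routine point is the boundary-correspondence step: one must know that $f$ actually extends to a homeomorphism of the outer Jordan boundaries and sends $\partial_+A$ to $\partial_+B$. This is where the Jordan hypothesis on the outer components is essential, since it is what allows the Carath\'eodory extension of $\psi_A^{-1}$ and $\psi_B$ to transport the boundary values of $g$ (which exist by Lemma \ref{lemma5}) back to bona fide values of $f$ on $\partial\Omega_A$; the inner boundary components play no role whatsoever in the argument, only their modular contribution through $Q$ and $Q'$.
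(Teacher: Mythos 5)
Your proof is correct and follows essentially the same route as the paper: the paper's own proof of Lemma \ref{lemma6} is precisely to take Riemann mappings $\Omega_A \to \ID$ and $\Omega_B \to \ID$ and reduce to Lemma \ref{lemma5}, and your write-up merely supplies the Carath\'eodory-extension and conformal-invariance details that the paper leaves implicit. (One cosmetic slip: the compact connected set should be $\Omega_A \setminus A$, the bounded complementary component of the ring $A$, rather than $\overline{\Omega_A}\setminus A$, which would also contain the outer Jordan curve; this does not affect the argument.)
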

    \noindent{\bf Proof.}  Take  Riemann mappings $\Omega_A\mapsto \ID$ and $\Omega_B\mapsto \ID$ to reduce the problem to Lemma \ref{lemma5}.  The result follows. \hfill $\Box$
    
\medskip
 
\begin{figure}[ht!]
\begin{center}
  \includegraphics[width= \textwidth]{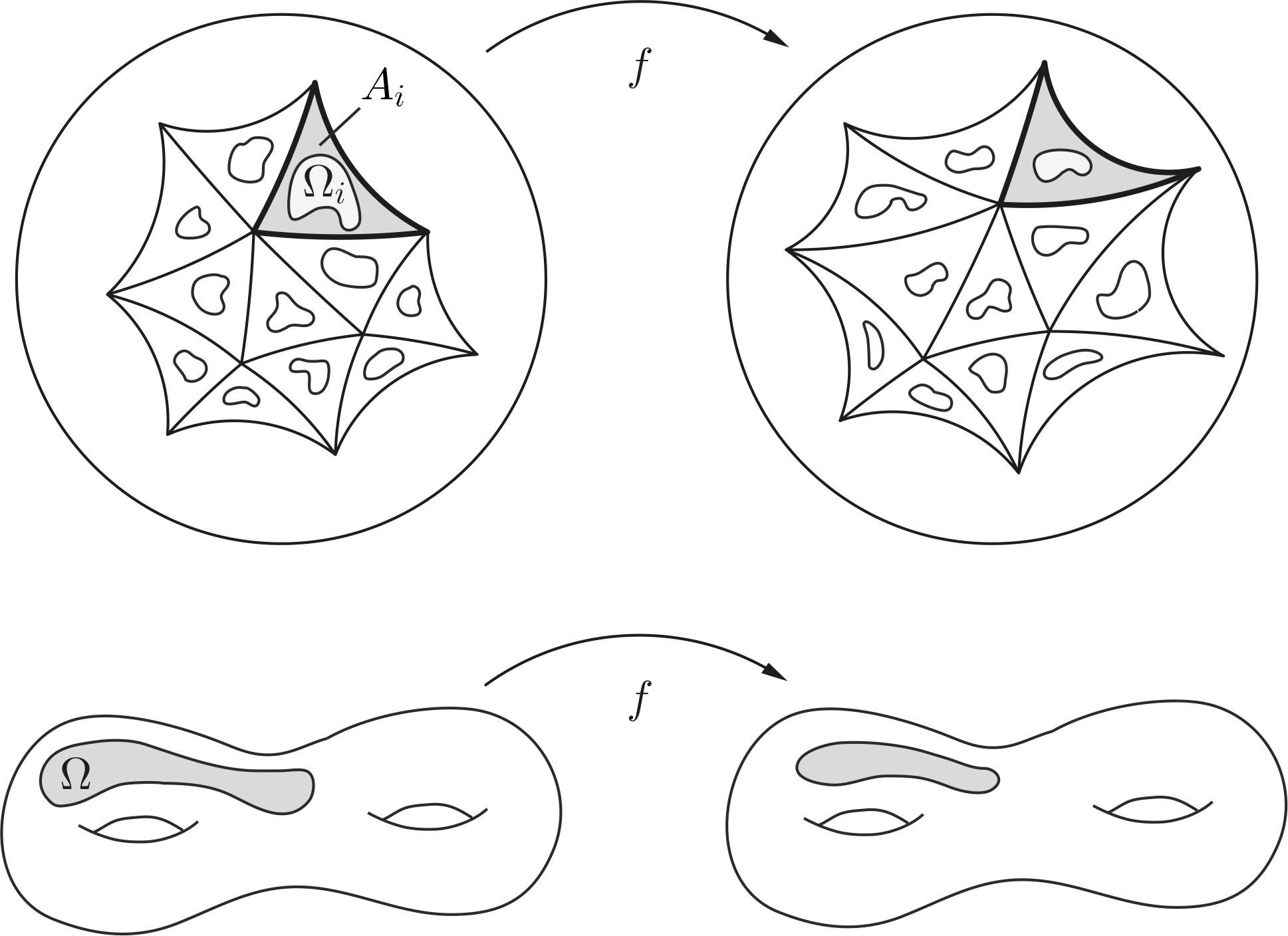}
  \caption{Theorems \ref{sepinmod} and \ref{Riemann surface}.}
\end{center}
\end{figure}
    
    The next theorem is the general result we seek.
    
    \begin{theorem}\label{sepinmod}  Let $\Omega \subset \IC$ be a Jordan domain and let $f:\Omega\to f(\Omega)\subset \IC $ be a homeomorphism of finite distortion with $f(\Omega)$ also a Jordan domain.  Suppose that $E\subset \Omega$ is a set which is  $Q$-separated in modulus  with
 \begin{equation}\label{42}\|\; \mu_f \; \|_{L^\infty(\Omega \setminus E)} \leq k < 1.
 \end{equation}
 Then $f|\partial\Omega$ has a $K_F$--quasiconformal extension $F:\overline{\Omega}\to\overline{f(\Omega)}$ and
 \begin{equation}\label{43}
 K_F \leq \left( 1+\frac{4 \beta_0}{Q} \right) \left( 1+\frac{4 \beta_0 K_0}{Q} \right) \; K_0, \hskip20pt K_0=\frac{1+k}{1-k}, \quad \beta_0=2.4984 \ldots.
 \end{equation} 
    \end{theorem}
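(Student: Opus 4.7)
The plan is to perform quasiconformal surgery on $f$ localized to the bad set $E$. By the definition of $Q$-separation in modulus, choose disjoint annular regions $A_i\subset \Omega$ with $\Mod(A_i)\geq Q$ whose bounded complementary components $D_i$ cover $E$. After passing to a subcollection for which the disks $\Omega_{A_i}$ bounded by the outer boundary $\partial_+ A_i$ are maximal under inclusion, we may assume the $\Omega_{A_i}$ are pairwise disjoint and still $E\subset \bigcup_i D_i\subset \bigcup_i \Omega_{A_i}$. Because each $A_i$ lies in $\Omega\setminus E$, hypothesis \eqref{42} gives $\|\mu_f\|_{L^\infty(A_i)}\leq k$, so $f|A_i$ is $K_0$-quasiconformal, and by the quasi-invariance of modulus
\[ \Mod(f(A_i))\;\geq\;\Mod(A_i)/K_0\;\geq\;Q/K_0.\]

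Next, fix $i$ and apply Lemma \ref{lemma6} to $f|A_i:A_i\to f(A_i)$, noting that $\partial_+ A_i$ is a Jordan curve mapped homeomorphically onto the outer Jordan boundary of $f(A_i)$. The lemma produces a quasiconformal homeomorphism $F_i:\overline{\Omega_{A_i}}\to\overline{\Omega_{f(A_i)}}$ which agrees with $f$ on $\partial_+ A_i$ and whose distortion is at most
\[ K_0\left(1+\frac{4\beta_0}{\Mod(A_i)}\right)\left(1+\frac{4\beta_0}{\Mod(f(A_i))}\right)\;\leq\; K_0\left(1+\frac{4\beta_0}{Q}\right)\left(1+\frac{4\beta_0 K_0}{Q}\right).\]
Now define $F:\Omega\to f(\Omega)$ by setting $F=F_i$ on each $\Omega_{A_i}$ and $F=f$ elsewhere. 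The pieces match continuously along every $\partial_+ A_i$, so $F$ is a homeomorphism; on $\Omega\setminus\bigcup_i\Omega_{A_i}\subset \Omega\setminus E$ its distortion is at most $K_0$; on each $\Omega_{A_i}$ it is bounded by the display above. Hence $F$ is quasiconformal on $\Omega$ with maximal distortion satisfying \eqref{43}. Since $\Omega$ and $f(\Omega)$ are Jordan domains, $F$ extends to a homeomorphism $\overline{\Omega}\to\overline{f(\Omega)}$, furnishing the asserted quasiconformal extension of the boundary map $f|\partial\Omega$.

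The main technical point lies in the refinement in the first step to pairwise disjoint $\Omega_{A_i}$: one needs that among the given disjoint annuli the filled-in disks $\Omega_{A_i}$ can be chosen to nest rather than cross, a small combinatorial observation on the ordering of disjoint Jordan curves in $\Omega$. Once this is arranged the remainder is distortion bookkeeping; in particular the inequality $\Mod(f(A_i))\geq Q/K_0$ is precisely what propagates the factor $K_0$ through Lemma \ref{lemma6} and explains the asymmetry of the two bracketed factors in \eqref{43}.
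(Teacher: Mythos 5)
Your overall strategy is the same as the paper's: replace $f$ inside each filled disk $D_i\cup A_i$ by the extension supplied by Lemma \ref{lemma6}, use quasi-invariance of modulus to get ${\rm mod}(f(A_i))\geq Q/K_0$ (which is indeed where the asymmetric factor $1+4\beta_0K_0/Q$ comes from), and keep $f$ outside. However, there is a genuine gap in the way you apply Lemma \ref{lemma6} and glue. You treat $\partial_+A_i$ as a Jordan curve on which $f$ and $F_i$ agree and then assert that the glued map is quasiconformal. For a general annular region $A_i$ the outer boundary need not be a Jordan curve at all (it need not even be locally connected), so Lemma \ref{lemma6} does not apply as stated; and even when $\partial_+A_i$ is a Jordan curve it may have positive area, in which case a homeomorphism that is quasiconformal off the curve need not be quasiconformal across it -- removability fails for wild curves, so your final step ``hence $F$ is quasiconformal'' is not justified. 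The paper's proof deals with exactly this by an approximation: inside each $A_i$ choose a smoothly bounded sub-annulus of modulus $Q'<Q$ (with $Q'$ arbitrarily close to $Q$) whose closure lies in $\Omega$, so that the gluing curves are smooth (hence removable), the distortion bound \eqref{42} holds in a neighbourhood of them, $f$ of the new outer boundary is a quasicircle, and Lemma \ref{lemma6} applies legitimately; letting $Q'\to Q$ recovers \eqref{43}. Your argument needs this (or an equivalent regularisation) inserted before the surgery.

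A second, smaller point: your reduction to pairwise disjoint filled disks by ``passing to maximal elements'' needs justification, since a priori one must rule out infinite strictly increasing nested chains of the $\Omega_{A_i}$; this does follow (e.g.\ superadditivity of modulus shows any nested chain of disjoint annuli of modulus $\geq Q$ surrounding a fixed $\overline{A_{i_0}}$ inside $\Omega$ is finite), but as written it is an assertion, not an argument. On the positive side, this disjointification also guarantees $A_i\cap E=\emptyset$ for the annuli you keep, a point you use implicitly when invoking \eqref{42} on $A_i$; the paper secures the same thing through its approximation step. With the regularisation inserted, your bookkeeping of the constants matches the paper's and the proof is complete.
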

    
    Note that the requirement that $\Omega$ and $f(\Omega)$ be Jordan domains is simply to avoid tedious problems with the definition of boundary values through prime ends and the like.
    
    \medskip
    
    \noindent{\bf Proof.}  Let $\{A_i\}_{i=1}^{\infty}$ be the annuli separating $E$ given from the definition of  $Q$-separated in modulus. We may assume by an approximation argument (choosing sub-annuli for some $Q'<Q$) that for each $i$,  $\Omega_i=\overline{A_i\cup D_i}$ (where $D_i$ is the bounded component of $\IC\setminus A_i$) is a smoothly bounded Jordan disk and that the distortion bound \eqref{42} holds on a neighbourhood of $\partial A_i$. It follows that $f(\Omega_i)$ is a quasidisk.   Lemma \ref{lemma6} gives $F_i:\Omega_i\to f(\Omega_i)$ with the stated distortion bounds as $Q'={\rm mod}(f(A_i)) \geq {\rm mod}(A_i)/K_0 = Q/K_0$. We define a new mapping by 
    \[ F(z) = \left\{\begin{array}{ll}  f(z), & z\in \Omega \setminus \bigcup_{i=1}^{\infty} A_i \cup D_i, \\
F_i(z),  & z\in  \overline{A_i \cup D_i}. \end{array}\right.\]
    It is a moments work to see that $F$ is $K_F$--quasiconformal and $K_F$ satisfies \eqref{43}. \hfill $\Box$

\section{Examples} \label{G example}
 
 Here we present an elementary example to show that some conditions are necessary of the set $E$ where we relax control of the distortion.
 
We consider first what could happen, if the mapping $g_0 \colon \IS \to \IS$ was not quasisymmetric. Instead of the disk we work in the upper-half space.  The map 
\[ f_0(x) = \left\{ \begin{array}{ll} x, & x \leq 0, \\ x^2, & x\geq 0 \end{array} \right. \]
is not quasisymmetric.  Let $h:[0,\pi]\to [0,1]$ be a homeomorphism and set
\[ F(z) =  z |z|^{1-h(\theta)}, \hskip20pt \theta=\arg(z).\]
  Then $F|\IR = f_0$.  For suitable choices of $h$ we can arrange that large distortion is supported on a thin wedge. For instance if $\epsilon>0$ is given
  \[ h(\theta) = \left\{\begin{array}{ll} 0, & \theta<\pi/2-\epsilon, \\ (\theta-\pi/2+\epsilon)/2\epsilon, &  \pi/2-\epsilon\leq \theta\leq \pi/2+\epsilon,  \\ 1, & \theta>\pi/2+\epsilon \end{array}\right. \]
  then 
    \[ K(z, F) = \left\{\begin{array}{ll} 1, & \arg(z) > \pi/2 +\epsilon, \\ 2, & \arg(z) < \pi/2 -\epsilon. \end{array}\right.
    \]
  It would be interesting to determine how finer conditions,  such as if the distortion is bounded outside a cusp (with endpoint on $\IR$) might influence the regularity of the boundary values.

\bigskip

The upper bound for $\nu_\mathfrak{g}$ obtained in Lemma \ref{lemma4} is not sharp. Next we consider an example to get an idea what the sharp bound might be. For $a \in (0,1)$ we define a mapping $g \colon \ID \setminus \{ [-a,a] \} \to \IA(r_0,1)$ as a composition of three mappings $g = g_3 \circ g_2 \circ g_1$.  Here $g_3(z) = r_0 (z-\sqrt{z^2-1})$, $g_2(z) = \sin z$ and
\[
  g_1(z) = \frac{\pi}{2 {\cal K}(a^2)} \textrm{arcsn}\left( \frac{z}{a},a^2 \right),
\]
where \textrm{arcsn} is the inverse Jacobi elliptic sine function. The mapping $g$ is plotted in Figure \ref{Figure2}. The curves $g^{-1}(\IS(0,r))$ are hyperbolic ellipses \cite[Theorem 3.5]{ALV}
\begin{equation*}
  \{ z \in \ID \colon \rho_\ID(-a,z)+\rho_\ID(a,z) = c \}
\end{equation*}
and it is easy to show that
\begin{equation}\label{sharpexample}
  \nu_\mathfrak{g} = \frac{1+a^2}{1-a^2}.
\end{equation} 
The radius $r_0$ is defined by $a$ and can be solved from the equation $g(1)=1$. We cannot  solve this explicitly, but $r_0 \approx a/2$ when $a$ is close to 0 and $r_0 \approx a$ when $a$ is close to 1.

\medskip

Next note that the mappings $g_1$ and $g_2$ are conformal and that the map $g_3:g_2\circ g_1(\IS) \to \IS$ from the ellipse $g_2\circ g_1(\IS)$ are the boundary values of a linear mapping.  In fact 
\[ g_{3}^{-1} (\zeta) =  \frac{1}{2} \big( \frac{1}{r_0} \zeta + {r_0}\bar\zeta\big), \hskip10pt |\zeta|=1. \]
The best quasiconformal extension of these boundary values is the linear map $g_{3}^{-1} (w) = \frac{1}{2}\big( \frac{1}{r_0} w+ {r_0}\bar w \big)$ itself,  with distortion $K=\frac{1+r_{0}^2}{1-r_{0}^{2}}$,  \cite{RS,Str}.  The next theorem follows.

\begin{theorem}  Let $\mathfrak{g}=(\ID\setminus [-a,a],0)$ with associated quasisymmetric mapping $g_0:\IS\to\IS$.  Then any quasiconformal extension of $g_0$ has distortion   
\begin{equation*}
K \geq \frac{1+r_0^2}{1-r_0^2} =   \frac{1+e^{-2m_\mathfrak{g}}}{1-e^{-2m_\mathfrak{g}}} ,   \hskip15pt \log \frac{1}{r_0} = m_\mathfrak{g}.
\end{equation*}
\end{theorem}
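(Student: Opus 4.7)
The strategy is to identify a natural Teichm\"uller quasiconformal extension of $g_0$ and invoke uniqueness. Using the factorisation $g = g_3 \circ \psi$ with $\psi = g_2 \circ g_1$ conformal on $\ID \setminus [-a,a]$ onto $E \setminus [-1,1]$, where $E$ is the interior of the ellipse $L(\IS)$ and
\[
L(\zeta) = \frac{1}{2}\left(\frac{\zeta}{r_0} + r_0 \bar\zeta\right)
\]
is the affine map coinciding with $g_3^{-1}$ on $\IS$ and having constant Beltrami coefficient of modulus $r_0^2$, the plan is to define
\[
G_0 := L^{-1} \circ \psi \qquad \text{on } \ID \setminus [-a,a].
\]
This is a $(1+r_0^2)/(1-r_0^2)$-quasiconformal homeomorphism of $\ID\setminus[-a,a]$ onto $\ID\setminus[-c,c]$ with $c = L^{-1}(1) = 2r_0/(1+r_0^2)$, and $G_0|_\IS = L^{-1} \circ \psi|_\IS = g_3 \circ \psi|_\IS = g_0$, using that $L^{-1}|_{\partial E} = g_3|_{\partial E}$ since $L|_\IS = g_3^{-1}|_\IS$. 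The conjugation symmetry $\psi(\bar z) = \overline{\psi(z)}$ forces the two one-sided boundary traces of $G_0$ on $[-a,a]$ to agree (each being a real point of $[-c,c]$), so $G_0$ extends continuously across the slit to a quasiconformal homeomorphism $G_0 \colon \ID \to \ID$ of unchanged distortion, mapping $[-a,a]$ homeomorphically onto $[-c,c]$.

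A direct application of the chain rule for Beltrami coefficients gives
\[
\mu_{G_0}(z) \;=\; \mu_{L^{-1}}(\psi(z))\, \frac{\overline{\psi'(z)}}{\psi'(z)} \;=\; -r_0^2\, \frac{\overline{\varphi(z)}}{|\varphi(z)|},
\]
with $\varphi(z) = (\psi'(z))^2$. Near the endpoints $\pm a$ the boundary of $\ID \setminus [-a,a]$ locally resembles a half-plane, so $\psi$ has square-root branch points at $\pm a$; consequently $\varphi$ extends to $\ID$ as a holomorphic quadratic differential with simple poles at $\pm a$. These singularities are Lebesgue-integrable in two dimensions, so $\|\varphi\|_{L^1(\ID)} < \infty$, and $G_0$ is the Teichm\"uller map associated with the integrable holomorphic quadratic differential $\varphi$ and Teichm\"uller constant $k_0 = r_0^2$.

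Teichm\"uller's uniqueness theorem for extremal quasiconformal mappings with prescribed boundary values (see \cite{RS,Str}) then asserts that $G_0$ is the unique extremal self-map of $\ID$ in the boundary class of $g_0$, so every quasiconformal extension $G \colon \ID \to \ID$ of $g_0$ satisfies $\|\mu_G\|_{L^\infty(\ID)} \geq k_0 = r_0^2$, equivalently
\[
K \;\geq\; \frac{1+r_0^2}{1-r_0^2} \;=\; \frac{1+e^{-2m_\mathfrak{g}}}{1-e^{-2m_\mathfrak{g}}}.
\]
The main technical point requiring care is the $L^1$-integrability of $\varphi$ at the simple poles $\pm a$ and the verification that Teichm\"uller-Strebel uniqueness applies in the setting of holomorphic quadratic differentials with integrable simple poles; both follow from the explicit local behaviour of $\psi$ at $\pm a$ together with the standard $L^1$ Teichm\"uller framework on $\ID$.
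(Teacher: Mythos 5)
Your argument is correct in substance and is essentially the paper's: both proofs rest on the Reich--Strebel/Strebel extremality of the affine stretch $L$ for its boundary values, and you simply transport that fact to $\ID$ by the conformal map $\psi$ and phrase it as Teichm\"uller--Strebel uniqueness, whereas the paper states it on the ellipse and concludes at once. One detail in your write-up is wrong, though harmlessly so: $\psi=g_2\circ g_1$ has no square-root branch points at $\pm a$. Since $\mathrm{arcsn}(\pm1,a^2)=\pm K(a^2)$ one gets $g_1(\pm a)=\pm\pi/2$, and composing with $\sin$ removes the square-root behaviour, so $\psi$ extends to a conformal homeomorphism of $\ID$ onto the ellipse interior $E$ taking $\pm a$ to the tips $\pm1$; indeed your own symmetry $\psi(\bar z)=\overline{\psi(z)}$ forces tip-to-tip, since the only conjugation-invariant prime ends on the inner boundary are the tips, and this is what makes the one-sided traces agree. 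Consequently $\varphi=(\psi')^2$ is holomorphic and zero-free in $\ID$ with $\|\varphi\|_{L^1(\ID)}=\mathrm{area}(E)<\infty$, the continuation of $G_0$ across the slit is automatic, and the ``technical point'' about integrable simple poles disappears: Strebel's extremality and uniqueness theorem applies directly and yields $K\geq(1+r_0^2)/(1-r_0^2)$ exactly as you conclude.
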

As $m_{\mathfrak{g}}\to 0$,
\begin{equation}\label{sharpXX}
 \frac{1+e^{-2m_\mathfrak{g}}}{1-e^{-2m_\mathfrak{g}}} \geq \frac{1}{m_\mathfrak{g} }
 \end{equation} 
and this establishes the sharpness claimed in the solutions to Problems 1 and 2.
 \section{Distortion and topology}
 
 The aim of this section is to apply the results we have found above to show that in a degenerating sequence of Riemann surfaces the blowing up of the distortion of a reference map from a base surface cannot be confined to a simply connected set.  We will establish this result with explicit estimates.  These need some concepts which we now develop.
 
  \medskip
 
 Let $\Sigma$ be a hyperbolic Riemann surface and  ${\Omega}$ a   {\em Jordan disk} in $\Sigma$.  Let $U$ be a simply connected subset of $\Sigma$ with $\overline{\Omega}\subset U$.  Then the ring $U\setminus \overline{\Omega}$ is conformally equivalent to an annulus $\IA(r,1)$,   since $\Sigma$ is hyperbolic   $0<r<1$,  and the modulus of   $U\setminus \overline{\Omega}$ is defined to be $\Mod(U\setminus \overline{\Omega}) = \log\frac{1}{r}$.  We then define the modulus of $\Omega$ in $\Sigma$ as 
 \begin{equation*}
 \Mod_\Sigma(\Omega) = \sup_U \; \{\, \Mod(U\setminus \overline{\Omega})\, \}
 \end{equation*}
where the supremum is over simply connected subsets of $\Omega$. It is easy to see that if $\Omega$ is a Jordan domain in $\Sigma$, then $\partial \Omega$ is locally connected and an elementary continuity argument implies there is a simply connected set $U$ with $\partial\Omega\subset U$ and hence $0< \Mod_\Sigma(\Omega)<\infty$.  While it is generally impossible to identify this number, one can estimate it by considering the hyperbolic distance of $\partial \Omega$ to a set of arcs which cut $\Sigma$ into a simply connected region and which do not meet $\overline{\Omega}$.
 
 Next,  let $\Gamma$ be the universal covering group of M\"obius transformations of the disk for $\Sigma$. So
 \[ \Sigma = \ID/\Gamma. \]
 If $U\subset \Sigma$ is simply connected and $\Omega\subset U$, then $U$ lifts to  (more correctly a lift can be chosen so that)  a disjoint collection of simply connected sets $\{U_\gamma:\gamma\in\Gamma\}$ containing the lifts $\{\Omega_\gamma:\gamma\in\Gamma\}$ of $\Omega$.  Thus,  given a Jordan domain $\Omega$, we lift to the universal cover to see a disjoint collection of simply connected $U_\gamma$ with $\overline{\Omega_\gamma} \subset U_\gamma$ and because the projection is locally conformal, and conformal as a map $U_\gamma\to U$ for each $\gamma\in \Gamma$,  we have 
 \begin{equation*}
 \Mod(U_\gamma\setminus \overline{\Omega}_\gamma) = \Mod_\Sigma(\Omega) > 0.
 \end{equation*}
In particular we see that 
\[ E = \cup_{\gamma\in \Gamma} \; \Omega_\gamma\]
can be $Q=\Mod_\Sigma(\Omega)$ separated in modulus in $\ID$.

\medskip

 Next suppose that $\tilde{\Sigma}$ is a Riemann surface with covering group $\tilde\Gamma$and that $f:\Sigma \to \tilde\Sigma$ is a homeomorphism of finite distortion.  In what follows the supposition that $f$ is a homeomorphism can be weakened. All that is really required is that  $f$ is a mapping of finite distortion which is a homotopy equivalence (or possibly just $\pi_1$-injective).  However this leads to a number of technical difficulties which we do not wish to go in to.  The mapping $f$ lifts to the universal cover to a homeomorphic map $F:\ID\to\ID$ of finite distortion, automorphic with respect to these groups:  $F\circ \gamma = \tilde\gamma\circ F$, where $\gamma\mapsto\tilde\gamma$ is the isomorphism between fundamental groups induced by the map $f$.  Local conformal coordinates define the $z$ and $\zbar$ derivatives and we can define the Beltrami coefficient in the usual way.  Next suppose that
 \begin{equation*}
 \| \; \mu_f \;\|_{L^\infty(\Sigma\setminus \Omega)} \leq k < 1.
 \end{equation*}
 Then,  by construction, for each $\gamma$, 
 \begin{equation*}
  \| \; \mu_F \;\|_{L^\infty(U\gamma \setminus \Omega_\gamma)} \leq k < 1.
 \end{equation*}
 Now fix $\gamma_0\in \Gamma$,  set $K=\frac{1+k}{1-k}$, $U_0=U_{\gamma_0}$  and  assume that $\partial U_0$ is a Jordan curve ($U_0$ is simply connected) by an approximation argument.  Now,  by Lemma \ref{lemma6} we know that   $F_0= F|\partial U_0$ admits a  $K_\gamma$--quasiconformal extension $\tilde{F}_0:\overline{U_0} \to \overline{F(U_0)}$ with $\tilde{F}_0|\partial U_0 = F_0|\partial U_0 = F|\partial U_{\gamma_0}$ and 
 \[ K_0 \leq K \; (1+4\beta_0/{\rm mod}_\Sigma(\Omega)) (1+4\beta_0 K /{\rm mod}_\Sigma(\Omega)). \]
 We can therefore define a new map using the fact that $F$ is automorphic and we have only changed $F$ on part of a fundamental domain.   Thus
 \begin{equation*}
 \tilde{F}(z) = \left\{\begin{array}{ll} \tilde{\eta}^{-1}\circ \tilde{F}_0 \circ  \eta (z),   & \mbox{if $z\in U_\gamma$ and $\eta(U_\gamma) = U_{\gamma_0}$}, \\ F(z), & \mbox{otherwise.} \end{array} \right.
 \end{equation*}
This new map satisfies the same distortion bounds as $\eta$ and $\tilde{\eta}$ are M\"obius,  is automorphic with respect to the groups $\Gamma$ and $\tilde{\Gamma}$ and therefore decends to a map $\Sigma\to\tilde{\Sigma}$.  We have proved the following theorem.
 
  \begin{theorem}\label{Riemann surface}  Let $\Sigma$ be a  hyperbolic Riemann surface and $\Omega$ a Jordan domain in $\Sigma$.  Suppose that $\tilde{\Sigma}$ is another Riemann surface and that $f:\Sigma \to \tilde{\Sigma}$ is a mapping of finite distortion with
 \begin{equation*}
 \|\mu_f \|_{L^\infty(\Sigma\setminus\Omega)} = k < 1, \hskip10pt K=\frac{1+k}{1-k}.
 \end{equation*}
 Then there is a $K^*$--quasiconformal map $ f^*:\Sigma\to\tilde{\Sigma}$ homotopic to $f$ and 
 \[ K^* \leq  K \; (1+4\beta_0/{\rm mod}_\Sigma(\Omega)) (1+4\beta_0 K /{\rm mod}_\Sigma(\Omega)), \quad \beta_0=2.4984 \ldots.  \]
 \end{theorem}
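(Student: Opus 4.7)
The plan is to pass to the universal cover and apply Lemma \ref{lemma6} equivariantly. First I would lift $f$ to a map $F:\ID\to\ID$ of finite distortion, automorphic in the sense that $F\circ\gamma=\tilde\gamma\circ F$ where $\gamma\mapsto\tilde\gamma$ is the isomorphism $\Gamma\to\tilde\Gamma$ induced by $f$. Since $\Omega$ is a Jordan domain, the supremum defining $\mathrm{mod}_\Sigma(\Omega)$ is attained to within any $\epsilon$ by some simply connected $U\subset\Sigma$ with $\overline{\Omega}\subset U$ and $\partial U$ a Jordan curve. Lifting $U$ chooses a disjoint family $\{U_\gamma\}_{\gamma\in\Gamma}$ of simply connected open sets in $\ID$ each containing exactly one lift $\Omega_\gamma$ of $\Omega$, and because the covering map is a conformal bijection $U_\gamma\to U$ we have $\mathrm{mod}(U_\gamma\setminus\overline{\Omega_\gamma})=\mathrm{mod}_\Sigma(\Omega)$ for every $\gamma$. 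By hypothesis, the Beltrami coefficient $\mu_F$ satisfies $|\mu_F|\leq k$ on each annular region $U_\gamma\setminus\overline{\Omega_\gamma}$.

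Next I would apply Lemma \ref{lemma6} with $A=U_{\gamma_0}\setminus\overline{\Omega_{\gamma_0}}$ and $B=F(A)$ for a single fixed $\gamma_0$. Since $F$ is $K$-quasiconformal on $A$, we have $\mathrm{mod}(B)\geq\mathrm{mod}(A)/K=\mathrm{mod}_\Sigma(\Omega)/K$. The lemma yields a quasiconformal extension $\tilde F_0:\overline{U_{\gamma_0}}\to\overline{F(U_{\gamma_0})}$ agreeing with $F$ on $\partial U_{\gamma_0}$, with distortion bounded by
\[
K\Bigl(1+\frac{4\beta_0}{\mathrm{mod}_\Sigma(\Omega)}\Bigr)\Bigl(1+\frac{4\beta_0 K}{\mathrm{mod}_\Sigma(\Omega)}\Bigr).
\]
Then I would propagate this modification equivariantly: for each $\gamma$ there is a unique $\eta\in\Gamma$ with $\eta(U_\gamma)=U_{\gamma_0}$, and I define
\[
\tilde F(z)=\begin{cases} \tilde\eta^{-1}\circ\tilde F_0\circ\eta(z), & z\in U_\gamma,\\ F(z), & z\in\ID\setminus\bigcup_\gamma U_\gamma.\end{cases}
\]
Because $\eta$ and $\tilde\eta$ are conformal M\"obius transformations of the disk, conjugating by them preserves the distortion bound, and because $\tilde F_0|\partial U_{\gamma_0}=F|\partial U_{\gamma_0}$, the pieces match continuously across each $\partial U_\gamma$.

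It remains to check that $\tilde F$ is automorphic with respect to $\Gamma,\tilde\Gamma$ so that it descends to a map $f^*:\Sigma\to\tilde\Sigma$. This reduces to verifying $\tilde F\circ\delta=\tilde\delta\circ\tilde F$ for every $\delta\in\Gamma$, which follows from the equivariance of $F$ on the complement of $\bigcup_\gamma U_\gamma$ and from the fact that the definition on each $U_\gamma$ is constructed via the canonical $\eta$ sending $U_\gamma$ to the reference $U_{\gamma_0}$; composing with $\delta$ merely shuffles the family $\{U_\gamma\}$ consistently. The resulting $f^*$ is a homeomorphism of $\Sigma$ onto $\tilde\Sigma$, quasiconformal with the stated bound on $K^*$, and homotopic to $f$ since $\tilde F$ agrees with $F$ outside the lifts of $U$ and both restrict to boundary-preserving homeomorphisms on each simply connected piece $U_\gamma$ where $F$ and $\tilde F$ thus differ by a homotopy rel boundary.

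The main obstacle is a bookkeeping issue rather than an analytic one: ensuring that the equivariant redefinition is genuinely well posed and independent of the choice of $\gamma_0$ up to the automorphic requirement, and that the pieces glue to a globally quasiconformal mapping across the partition boundaries $\partial U_\gamma$ (where one needs the standard removability of quasicircles for quasiconformal maps, valid here because each $\partial U_\gamma$ is the image of a smooth Jordan curve under a quasiconformal map after the approximation step). The homotopy claim is then routine since $\tilde F$ and $F$ agree outside a disjoint union of simply connected sets, on each of which any two orientation-preserving homeomorphisms with matching boundary values are isotopic rel boundary.
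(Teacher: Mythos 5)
Your proposal is correct and follows essentially the same route as the paper's own argument: lift to the universal cover, choose a simply connected $U\supset\overline{\Omega}$ nearly realising ${\rm mod}_\Sigma(\Omega)$, apply Lemma \ref{lemma6} to a single lifted annulus $U_{\gamma_0}\setminus\overline{\Omega_{\gamma_0}}$ (using ${\rm mod}(F(A))\geq {\rm mod}(A)/K$), and propagate the modification equivariantly by M\"obius conjugation so that the new map descends to $\Sigma$ with the stated bound. Your added remarks on gluing across $\partial U_\gamma$ and on the rel-boundary homotopy only make explicit what the paper leaves as routine.
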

 This theorem quantifies the well known fact that in a sequence of degenerating Riemann surfaces,  there is an essential loop on which the distortion back to a reference surface is blowing up.
An interesting thing to note here is that $K^* \to K$ as ${\rm mod}_\Sigma(\Omega)\to \infty$,  and that typically $K^* \leq 8 \beta_0 \;  K^2/{\rm mod}_{\Sigma(\Omega)}^{2}$.

\medskip

Finally,  there are obvious extensions of this theorem to the case that $\Omega$ is a disjoint union of Jordan domains and similar estimates will pertain.

\bigskip

\noindent R. Kl\'en -  University of Turku, Finland, and Massey University,  Auckland, New Zealand, riku.klen@utu.fi

\noindent G. J. Martin -  Massey University,  Auckland, New Zealand,    g.j.martin@massey.ac.nz


\begin{thebibliography}{99} 

\bibitem{AB}  A. Beurling and L.V. Ahlfors, {\em The boundary correspondence under quasiconformal mappings},  Acta Math., {\bf  96},  (1956),   125--142.

\bibitem{ALV} G.D. Andersson, M. Lehtinen and M.K. Vuorinen, {\em Conformal invariants in the punctured unit disk}, Ann. Acad. Sci. Fenn. Math., {\bf 19}, (1994), 133--146.

\bibitem{AIM} K. Astala, T. Iwaniec and G.J. Martin, {\em Elliptic partial differential equations and quasiconformal mappings in the plane}. Princeton Mathematical Series, {\bf 48}. Princeton University Press, Princeton, NJ, 2009.

 \bibitem{AIMem} K. Astala, T. Iwaniec, G.J. Martin and J. Onninen, {\em Extremal Mappings of Finite Distortion}, Proc. London Math. Soc., {\bf 91}, (2005), 655-702.

\bibitem{AIM1} K. Astala, T. Iwaniec and G.J. Martin, {\em Deformations of Annuli with Smallest Mean Distortion}, Archive for Rational Mechanics and Analysis,  {\bf 195}, (2010), 899--921.
  
 
\bibitem{BeardonMinda} A.F. Beardon and D. Minda, {\em The hyperbolic metric and geometric function theory},   Quasiconformal mappings and their applications, 9--56, Narosa, New Delhi, 2007.  

\bibitem{DE} A. Douady  and C.J. Earle, {\em Conformally natural extension of homeomorphisms of the circle}, Acta Math., {\bf  157},  (1986),   23--48. 

 \bibitem{IMbook} T. Iwaniec and G.J. Martin, {\em Geometric function theory and non-linear analysis}, The Clarendon Press, Oxford Mathematical Monographs, Oxford University Press, New York, 2001.

 \bibitem{IMbelt} T. Iwaniec and G.J. Martin, {\em The Beltrami Equations}, Mem. Amer. Math. Soc. 191, {\bf 892}, (2008).

 \bibitem{IMO}   T. Iwaniec, G.J. Martin and J. Onninen,  {\em Minimisers of $L^p$-mean distortion},  Computational Methods and Function Theory,  to appear.

\bibitem{JV}  P. J\"arvi and M. Vuorinen, {\em 
Uniformly perfect sets and quasiregular mappings}, 
J. London Math. Soc., (2) {\bf 54},  (1996),  515--529. 
 
\bibitem{Martin89}  G. J. Martin, {\em Quasiconformal and affine groups},  J. Diff. Geom., {\bf  29},  (1989),   427--448. 

 \bibitem{Martin3}  G. J. Martin, {\em The Teichm\"uller problem for mean distortion},  Ann. Acad. Sci. Fenn., {\bf 34}, (2009), 1--15.
      

 \bibitem{RS} E. Reich and K. Strebel, {\em Extremal plane quasiconformal mappings with given boundary values},  Bull. Amer. Math. Soc., {\bf 79}, (1973), 488--490.
 
 \bibitem{Str} Strebel, K., {\em Extremal Quasiconformal Mappings}, Results Math., {\bf 10}, (1986), 168--210.

 \end{thebibliography}
\end{document}